\newtheorem{theorem}{Theorem}[section]
\newtheorem{lemma}[theorem]{Lemma}
\newtheorem{corollary}[theorem]{Corollary}
\newtheorem{proposition}[theorem]{Proposition}
\theoremstyle{definition}
\newtheorem*{remark}{Remark}
\newtheorem*{acknowledgement}{Acknowledgement}
\newtheorem*{conj}{Conjecture}
\def\beqa{\begin{eqnarray}}
\def\eeqa{\end{eqnarray}}
\def\beqa{\begin{eqnarray}}
\def\eeqa{\end{eqnarray}}
\DeclareMathOperator{\Ker}{Ker}
\DeclareMathOperator{\Stab}{Stab}
\DeclareMathOperator{\Isom}{Isom}
\DeclareMathOperator{\GL}{GL}
\DeclareMathOperator{\SL}{SL}
\DeclareMathOperator{\SO}{SO}
\DeclareMathOperator{\SU}{SU}
\DeclareMathOperator{\diam}{diam}
\DeclareMathOperator{\diag}{diag}
\DeclareMathOperator{\rank}{rank}
\def\R{\mathbb{R}}
\def\sl{\mathfrak{sl}}
\begin{document}

\title{Geometric cycles and bounded cohomology for a cocompact lattice in $\SL_n(\mathbb R)$}

\author[Shi Wang]{Shi Wang}
\address{Max Planck Institut f\"ur Mathematik\\
Bonn\\
Germany}
\email{shiwang.math@gmail.com}

\begin{abstract}
We show there exists a closed locally symmetric manifold $M$ modeled on $\SL_n(\mathbb R)/\SO(n)$, and a non-trivial homology class in degree $\dim(M)-\rank(M)$ represented by a totally geodesic submanifold that contains a circle factor.
As a result, the comparison map $c^k:H_b^k(M,\mathbb R)\rightarrow H^k(M,\mathbb R)$ is not surjective in degree $k=\dim(M)-\rank(M)$. This provides a counterpart to a result of \cite{LW19} which states that $c^k$ is always surjective in degree $k\geq \dim(M)-\rank(M)+2$.
\end{abstract}

\maketitle


\section{Introduction}

Let $M$ be a connected, closed, oriented topological manifold. For each singular homology class $\alpha\in H_k(M,\mathbb R)$, one can associate to it a semi-norm which measures how efficiently $\alpha$ can be represented by a linear combination of simplices. More precisely, we define the \emph{Gromov norm}
\[||\alpha||_1:=\inf\{\sum_{i=1}^l|a_i|\;:\;\sum_{i=1}^la_i\sigma_i\textit{ is a cycle representing }\alpha\textit{ in }H_k(M,\mathbb R)\}.\]
	In particular, the Gromov norm of the fundamental class $[M]$ is called the \emph{simplicial volume} of $M$, denoted by $||M||$.

One interesting aspect of this topological invariant is that it reflects certain geometric properties of the underlying manifold. It is shown that the bounded cohomology of $M$ vanishes if the fundamental group $\pi_1(M)$ is amenable \cite{Gro82}, in which case the Gromov norm vanishes in all degrees according to the duality principle. This includes, for example, positively curved manifolds, flat manifolds, and most generally manifolds with nonnegative Ricci curvature. On the other hand, for a negatively curved manifold, all non-trivial $k$-classes have positive Gromov norm when $k\geq 2$ \cite{IY82}. It remains a mysterious question for manifolds of nonpositive curvature whether the Gromov norm is positive or zero. Intuitively speaking, if the homology class resembles more of a negatively curved feature, or that overall the negative curvature dominates the zero curvature, then the Gromov norm is positive. Some efforts have been made towards this direction in \cite{CW19, CW20}.

In the case of irreducible locally symmetric manifolds of noncompact type, it is shown that any non-trivial $k$-class has positive Gromov norm when $k\geq \dim(M)-\rank(M)+2$ \cite{LW19, LS06, Buc07}, and this has been generalized in \cite{Wang20}. All these results are based on the original straightening method introduced in \cite{Thu97}, which boil down to showing certain straightened $k$-simplices have uniformly bounded volume. However, as noted in \cite{LW19}, this in general fails when $k\leq \dim(M)-\rank(M)$ if $M$ is modeled on $\SL_n(\mathbb{R})/\SO(n)$. This suggests that there could be examples of nontrivial classes with zero Gromov norm in the corresponding degrees. In the present paper, we give an explicit construction of a non-trivial homology class in degree $k=\dim(M)-\rank(M)$, which is represented by a submanifold with a circle factor.

\begin{theorem}\label{thm:main}
For every integer $n\geq 3$, there exists a torsion-free cocompact lattice $\Gamma<\SL_n(\mathbb R)$, constructed as a congruence subgroup of the explicit example in Section \ref{sec:lattice}, and two non-trivial homology classes $\alpha\in H_{k}(M,\mathbb Q)$ and $\beta\in H_{n-1}(M,\mathbb Q)$ in the associated locally symmetric manifold $M=\Gamma\backslash \SL_n(\mathbb R)/\SO(n)$ where $k=\dim(M)-\rank(M)=n(n-1)/2$, such that 
\begin{enumerate}
\item $\alpha$ is represented by a totally geodesic submanifold $H$ covered by $\SL_{n-1}(\mathbb R)/\SO(n-1)\times \mathbb R$,
\item $\beta$ is represented by a flat $(n-1)$-torus $T$, and
\item the intersection number $i(H,T)\neq 0$.
\end{enumerate}
\end{theorem}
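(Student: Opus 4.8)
The plan is to produce $\alpha$ and $\beta$ as fundamental classes of geometric cycles, in the spirit of Millson--Raghunathan, and to obtain their non-triviality simultaneously from the single assertion $i(H,T)\neq 0$: since the intersection pairing $H_k(M,\mathbb{Q})\times H_{n-1}(M,\mathbb{Q})\to\mathbb{Q}$ on the closed oriented manifold $M$ (Poincar\'e duality, $k+(n-1)=\dim M$) is bilinear, a nonzero value forces $\alpha\neq 0$ and $\beta\neq 0$ at once. Thus the theorem reduces to exhibiting the two cycles with the stated geometry and computing their intersection number.

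Inside the $\mathbb{Q}$-algebraic group $\mathbf{G}$ underlying the arithmetic lattice of Section~\ref{sec:lattice}, I would single out, through a base point $x_0\in X=\SL_n(\mathbb{R})/\SO(n)$, two reductive subgroups: a subgroup $\mathbf{H}$ arising as the fixed-point group of a rational involution $\sigma$ of $\mathbf{G}$, with $\mathbf{H}(\mathbb{R})^{\circ}$ having orbit $X_H:=\mathbf{H}(\mathbb{R})^{\circ}\cdot x_0\cong\SL_{n-1}(\mathbb{R})/\SO(n-1)\times\mathbb{R}$ (totally geodesic, being $\Fix(\sigma)$), and an anisotropic $\mathbb{Q}$-torus $\mathbf{A}$ with $\mathbf{A}(\mathbb{R})^{\circ}\cong\mathbb{R}^{n-1}$, whose orbit $X_A:=\mathbf{A}(\mathbb{R})^{\circ}\cdot x_0$ is a maximal flat (again totally geodesic). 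A dimension count gives $\dim X_H=n(n-1)/2=k$ and $\dim X_A=n-1$, hence $\dim X_H+\dim X_A=\dim X$, and the $\mathbb{R}$-direction of $X_H$ supplies the circle factor of the quotient. The construction must moreover make $\mathbf{A}$ -- equivalently, the rational conjugate used for $\mathbf{H}$ -- generic enough that $X_H$ and $X_A$ are \emph{transverse} at $x_0$, i.e.\ $T_{x_0}X=T_{x_0}X_H\oplus T_{x_0}X_A$; this is a Zariski-open, hence by weak approximation attainable, condition on a rational parameter. Passing to a torsion-free congruence subgroup $\Gamma$ for which $\Gamma_H:=\Gamma\cap\mathbf{H}(\mathbb{R})$ and $\Gamma_A:=\Gamma\cap\mathbf{A}(\mathbb{R})$ are cocompact lattices acting by orientation-preserving isometries and for which $H:=\Gamma_H\backslash X_H\to M$ and $T:=\Gamma_A\backslash X_A\to M$ are embeddings, we obtain closed oriented submanifolds, hence well-defined classes $\alpha=[H]\in H_k(M,\mathbb{Q})$ and $\beta=[T]\in H_{n-1}(M,\mathbb{Q})$, and the number $i(H,T)$.

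To compute $i(H,T)$, observe that the points of $H\cap T$ are organized by the double cosets $\Gamma_H\backslash\Gamma/\Gamma_A$, the relevant ones being those $\gamma$ with $X_H\cap\gamma X_A\neq\emptyset$; the trivial coset contributes the point $[x_0]$, which is transverse by the previous step and so carries local index $\pm 1$. The aim is to arrange that it is the \emph{only} contribution, which gives $i(H,T)=\pm 1\neq 0$. I would try to force this by choosing the congruence level -- and, if needed, a finite cover adapted to $\sigma$ -- so that every $\gamma\in\Gamma$ with $X_H\cap\gamma X_A\neq\emptyset$ already lies in $\Gamma_H\Gamma_A$; writing $\gamma=ha$ with $h\in\Gamma_H$, $a\in\Gamma_A$ then gives $X_H\cap\gamma X_A=h\bigl(X_H\cap X_A\bigr)=\{h x_0\}$, so $H\cap T=\{[x_0]\}$ and the theorem follows.

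The main obstacle, as I see it, is twofold, and lies essentially in what Section~\ref{sec:lattice} must deliver. First, fitting a closed totally geodesic copy of $\SL_{n-1}(\mathbb{R})/\SO(n-1)\times\mathbb{R}$ with a \emph{genuine} circle factor inside a cocompact arithmetic $\Gamma<\SL_n(\mathbb{R})$ forces the fixed-point group of $\sigma$ to be anisotropic modulo an $\mathbb{R}$-split central torus of rank one whose intersection with $\Gamma$ is infinite, and exhibiting a form of $\SL_n$ together with such a rational involution is delicate. Second, and more serious, the intersection count is not automatic: passing naively to a very deep congruence subgroup inflates the diameters of both $H$ and $T$ faster than the systole of $M$ grows, so one cannot discard the extra double cosets by volume considerations alone -- the argument must use that $H$ is the fixed locus of the involution $\sigma$, together with the precise placement of the flat $T$, to eliminate or cancel every contribution but the one at $[x_0]$. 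Given those two inputs, the dimension count, the orientability statements, and the deduction that $\alpha$ and $\beta$ are non-trivial because $i(H,T)\neq 0$ are all routine.
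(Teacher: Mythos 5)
Your overall skeleton matches the paper's: two complementary-dimensional totally geodesic cycles, non-triviality of both classes from $i(H,T)\neq 0$ via Poincar\'e duality, a generic rational conjugate to make the flat transverse to $X_{n-1}\times\mathbb R$ at the base point, the double coset parametrization $\pi_1(H)\backslash\Gamma/\pi_1(T)$ of the components of $H\cap T$, and congruence covers to control the extra intersections. The construction of the circle factor, which you flag as delicate, is done in the paper via the rank-one group of units $\{u\in\mathbb Z[\sqrt[4]{2}]^\times:\tau(u)u=1\}$, generated by an explicit $u_0$; this is a real but self-contained piece of the argument.

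The genuine gap is in your endgame. You propose to choose the congruence level so that \emph{every} $\gamma\in\Gamma$ with $X_H\cap\gamma X_A\neq\emptyset$ lies in $\Gamma_H\Gamma_A$, forcing $H\cap T=\{[x_0]\}$. This is exactly what cannot be arranged: there are ``left over'' double cosets that survive \emph{all} congruence covers (a $\gamma$ can be congruent to a product $ab$ modulo every $p^k$ without lying in $\Gamma_H\Gamma_A$), and you offer no mechanism to exclude them --- indeed you concede in your final paragraph that you do not see how to eliminate or cancel them. The paper's resolution is different in kind: it does not remove the leftover intersections but proves they are all transverse \emph{with the same sign} as $x_0$. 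The criterion is a factorization $\gamma=a\cdot b$ in $\SL_n(\mathbb R)$ with $a$ preserving $\widetilde H$ and its orientation and $b$ preserving $\widetilde T$ and its orientation. Survival modulo every $p^k$ yields $p$-adic solutions of the associated \emph{linear} system $a\,t=\gamma t\gamma^{-1}a$, $au=ua$; consistency gives a solution $a\in\GL_n(L)$ with $L=\mathbb Q[\sqrt[4]{2}]$; a one-dimensionality lemma for the solution space plus the Grunwald--Wang theorem (checked to be non-exceptional for $L$) produces $c\in L$ with $c^n=\det a$, normalizing $a$ into $\SL_n(\mathcal O_L)$; and an explicit mod-$4$ computation with the fundamental unit $u_0$ rules out the orientation-reversing sign of the lower-right entry when $n$ is odd. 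None of this number-theoretic input appears in your proposal, and without it (or a substitute) the intersection number could a priori vanish by cancellation. So the step you correctly identify as ``the more serious obstacle'' is not merely unfinished --- the route you sketch for it (reduction to a single intersection point) is not the one that works.
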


This immediately implies that the Gromov norm of the homology class $\alpha$ is zero. Therefore, we obtain the following corollary.

\begin{corollary}\label{cor:main} For every integer $n\geq 3$, there exists a torsion-free cocompact lattice $\Gamma<\SL_n(\mathbb R)$, and a nontrivial homology class $\alpha\in H_k(M,\mathbb R)$ in the associated locally symmetric manifold $M=\Gamma\backslash \SL_n(\mathbb R)/\SO(n)$ where $k=\dim(M)-\rank(M)=n(n-1)/2$, such that the Gromov norm $||\alpha||_1=0$. Equivalently, the comparison map
$H_b^k(\Gamma,\mathbb R)\rightarrow H^k(\Gamma,\mathbb R)$ is \emph{not} surjective in degree $k=n(n-1)/2$.
\end{corollary}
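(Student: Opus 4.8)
The plan is to construct $\Gamma$ arithmetically so that it contains two distinguished closed totally geodesic subvarieties: one coming from an embedded $\SL_{n-1}(\mathbb R)$-factor together with an extra $\mathbb R$-direction (a compact circle once we pass to a congruence quotient), and a maximal flat $(n-1)$-torus, arranged so that their homology classes pair nontrivially. First I would fix a $\mathbb Q$-form $G$ of $\SL_n$ of the appropriate type (e.g.\ $\SL_1(D)$ for a suitable division algebra $D$, or a special unitary group of a Hermitian form over a quadratic extension) so that $G(\mathbb R)\cong\SL_n(\mathbb R)$ and $G(\mathbb Z)$ is cocompact by Godement's criterion, and inside it identify a $\mathbb Q$-subgroup $L$ with $L(\mathbb R)\cong\SL_{n-1}(\mathbb R)\times\mathbb R$ — the $\mathbb R$-factor being the centralizer torus of the $\SL_{n-1}$-block. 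The associated subsymmetric space $\SL_{n-1}(\mathbb R)/\SO(n-1)\times\mathbb R$ has dimension $\big((n-1)(n-2)/2 + (n-2)\big) + 1 = n(n-1)/2 = k$, which fixes the degree. Passing to a deep enough congruence subgroup $\Gamma$ makes $M$ a manifold and makes $\Gamma\cap L(\mathbb R)$ a cocompact lattice in $L(\mathbb R)$; the orbit map produces a closed totally geodesic immersion $H\looparrowright M$ of a manifold covered by $\SL_{n-1}(\mathbb R)/\SO(n-1)\times\mathbb R$, hence a class $\alpha = [H]\in H_k(M,\mathbb Q)$ (orientability and, if needed, a further congruence condition kill the immersion's self-intersections or at least its non-orientable monodromy). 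Similarly a maximal $\mathbb Q$-split (or $\mathbb Q$-anisotropic-but-$\mathbb R$-split, via the cocompactness) torus $S$ with $S(\mathbb R)\cong(\mathbb R^{>0})^{n-1}$ gives, after intersecting with $\Gamma$, an embedded flat $(n-1)$-torus $T\hookrightarrow M$ and a class $\beta=[T]\in H_{n-1}(M,\mathbb Q)$.

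The key point — step (3) — is to show $i(H,T)\neq 0$, and this is where I expect the real work to lie. Note $\dim H + \dim T = k + (n-1) = n(n-1)/2 + (n-1) = \dim M$, so the intersection number is well-defined. The strategy is to choose $L$ and $S$ inside $G$ so that, at the level of the symmetric space $X=\SL_n(\mathbb R)/\SO(n)$, the totally geodesic flat corresponding to $S$ and the totally geodesic subspace corresponding to $L$ meet in exactly one point, transversally: arrange $\mathfrak l + \mathfrak s$ (the relevant tangent subspaces at the basepoint, inside $\mathfrak p$) to span $\mathfrak p$ with $\mathfrak l\cap\mathfrak s$ trivial. Then in $M$ the geometric intersection $H\cap T$ is a finite set of points each counted with sign $\pm1$ coming from orientations. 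To force the signed count to be nonzero I would pass to a congruence subgroup fine enough that $H\cap T$ is a single point — equivalently, arrange the relevant double coset $(\Gamma\cap L)\backslash\Gamma/(\Gamma\cap S)$ to be controlled — or, more robustly, compute the intersection pairing homologically: show that $\alpha$ maps to a nonzero class under the map $H_k(M,\mathbb Q)\to H_k(M, M\setminus T;\mathbb Q)\cong H^{n-1}_T(M)$ by a Poincaré–Lefschetz duality / Thom class argument, using that the normal bundle of $T$ is trivial (it's a flat torus in a flat) and that $H$ meets $T$ positively at at least one point and the total is nonzero by a counting argument over a suitable finite quotient.

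A secondary obstacle is ensuring $\alpha$ and $\beta$ are themselves \emph{nontrivial} in $H_*(M,\mathbb Q)$: but this follows formally from $i(H,T)\neq 0$, since a null-homologous cycle has zero intersection number with every cycle of complementary dimension — so in fact steps (1) and (2)'s nontriviality claims are consequences of (3), and I would state it that way. The remaining routine matters — verifying cocompactness via Godement, verifying torsion-freeness and the manifold property for deep congruence subgroups (Minkowski/Selberg), computing $\dim\big(\SL_{n-1}(\mathbb R)/\SO(n-1)\big)+1 = n(n-1)/2$, and checking that the $\mathbb R$-factor in $L(\mathbb R)$ descends to an honest circle (i.e.\ $\Gamma\cap L$ surjects onto a lattice in the $\mathbb R$-factor) — I would relegate to the explicit construction in Section \ref{sec:lattice}. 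The Corollary \ref{cor:main} is then immediate: a totally geodesic submanifold with a Euclidean (here circle) factor has an amenable-at-infinity, indeed flat, direction which makes its simplicial volume zero (its fundamental class is represented by smearing over the circle, so $\|\alpha\|_1 = \|H\|\cdot\|S^1\|$-type vanishing, or directly $\|H\|=0$ because $H$ itself has a self-map of nonzero degree via the circle, cf.\ Gromov), hence $\|\alpha\|_1 = 0$ while $\alpha\neq 0$; duality between $\ell^1$-homology and bounded cohomology then shows the comparison map is not surjective onto the Poincaré dual class of $\alpha$ in degree $k = n(n-1)/2$.
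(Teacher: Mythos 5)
Your derivation of Corollary \ref{cor:main} from Theorem \ref{thm:main} is correct and is exactly the paper's: $||\alpha||_1\leq ||N\times S^1||\leq C\,||N||\cdot||S^1||=0$ by the product inequality for simplicial volume, and then the duality principle converts a nontrivial class of zero Gromov norm into non-surjectivity of the comparison map in that degree. Your overall architecture for the theorem also matches the paper (anisotropic form of $\SL_n$ over $\mathbb Q[\sqrt[4]2]$, a totally geodesic $H$ covered by $X_{n-1}\times\mathbb R$ of dimension $n(n-1)/2$, a complementary flat torus $T$, and nontriviality of both classes deduced from $i(H,T)\neq 0$).

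The gap is precisely at the step you flag as ``where the real work lies,'' and neither of your two strategies closes it. Strategy (a) --- pass to a congruence cover deep enough that $H\cap T$ is a single point --- fails: there are \emph{left over} double cosets $[\gamma]=\pi_1(H)\cdot\gamma\cdot\pi_1(T)$ that survive every congruence cover, namely those for which $\gamma\equiv ab \pmod{p^k}$ is solvable for all $p,k$, and the paper must handle these rather than remove them. Strategy (b) (Thom class / counting over a finite quotient) does not address the actual difficulty, which is that the surviving extra intersections could a priori be non-transverse or carry opposite signs and cancel in the signed count. The paper's resolution is genuinely number-theoretic: for a left over $[\gamma]$, mod-$p^k$ solvability for all $p,k$ yields $p$-adic solutions of the linear system \eqref{eq:linear} and hence a solution $a$ over $L=\mathbb Q[\sqrt[4]2]$; one-dimensionality of the solution space (Lemma \ref{lem:one-dim}) together with the Grunwald--Wang theorem (Corollary \ref{cor:Grunwald-Wang}) allows rescaling $a$ to determinant one; and the explicit description of the units $\mathcal U_0=\{\pm u_0^k\}$ (Lemma \ref{lem:fundamental unit}) plus a mod-$4$ computation showing $-1$ is not a $\sqrt[4]2$-adic limit of powers of $u_0$ is needed to verify the orientation condition, after which Lemma \ref{lem:sign} gives that every surviving intersection is transverse with the same sign as the base point. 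Without an argument of this kind your proof of $i(H,T)\neq 0$, and hence of the nontriviality of $\alpha$ on which the corollary rests, is incomplete.
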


Our construction and proof are very similar to the general approach of the $\SL_n(\mathbb Z)$ case in the paper of Avramidi--Nguy$\tilde{\hat{\mathrm{e}}}$n-Phan \cite{ANP15}, and their simplified proof in \cite{ANP20}. For the purpose of making $\Gamma$ cocompact, we need to pass onto a number field, and the resulting extra number theoretic complexity makes the proof more difficult. Besides, geometric cycles in other type of symmetric spaces are constructed in \cite{MR80, Bena21} via different methods.

\subsection*{Organization of the paper} In Section \ref{sec:prelim}, we provide some results in number theory focusing on the number field $\mathbb Q[\sqrt[4]2]$ which will be used later in the proof. We give in Section \ref{sec:construction} the explicit constructions of the lattices as well as the homology cycles, we also describe the intersections of the cycles. In Section \ref{sec:proof}, we investigate further the intersections under congruence covers and prove Theorem \ref{thm:main}. In the last Section \ref{sec:Gromov norm}, we discuss the applications to the study of Gromov norm and bounded cohomology.


\begin{acknowledgement} The author would like to thank Grigori Avramidi and Tam Nguy$\tilde{\hat{\mathrm{e}}}$n-Phan for their helpful discussions, through which this work was motivated and initiated. I am very grateful for their careful explanations on the general approach and simplified proof of the $\SL_n(\mathbb Z)$ case in \cite{ANP15, ANP20}. I would also like to thank Grigori Avramidi for the innumerous help on the number theoretic arguments including the application of Grunwald-Wang theorem. Thanks to Pengyu Yang for pointing out the reference \cite{Conrad}. Also, I would like to thank the anonymous referee for helpful suggestions. The author is supported by the Max Planck Institut f\"ur Mathematik while this work was completed.
\end{acknowledgement}

\section{Preliminary}\label{sec:prelim} In this section, we give a brief introduction to some basic knowledge of algebraic number theory that will be used later in our proofs. Since we only focus on the field $\mathbb Q[\sqrt[4]2]$, most of the arguments become rather elementary and tedious. We suggest the readers who are familiar with the contents skip ahead to the next section and check back later when they are actually used.

\subsection*{Rings of integers}
A subfield $L\subset \mathbb C$ is called a \emph{number field} if it is a finite extension of $\mathbb Q$. In this paper, we are mainly concerned with the case $L = \mathbb{Q}[\sqrt[4]{2}]$. Let $\mathcal O\subset \mathbb C$ be the set of all \emph{algebraic integers}, that is, the set of roots of monic polynomials in integer coefficients. We denote $\mathcal O_L=\mathcal O\cap L$ the \emph{ring of integers} in $L$. 

\begin{lemma}\label{lem:integer}\cite[Theorem 3.1]{Conrad}
The ring of integers of $\mathbb{Q}[\sqrt[4]{2}]$ is $\mathbb{Z}[\sqrt[4]{2}]$. 
\end{lemma}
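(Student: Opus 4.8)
The plan is to show that $\mathbb{Z}[\sqrt[4]{2}]$ is integrally closed in $L = \mathbb{Q}[\sqrt[4]{2}]$ by computing the discriminant of the power basis $1, \theta, \theta^2, \theta^3$ with $\theta = \sqrt[4]{2}$ and showing that the only prime that could possibly divide the index $[\mathcal{O}_L : \mathbb{Z}[\theta]]$ is $2$, then ruling out $2$ by a direct local argument. First I would note that $\theta$ is a root of the Eisenstein polynomial $f(x) = x^4 - 2$, which is irreducible over $\mathbb{Q}$ by Eisenstein at $2$, so $[L:\mathbb{Q}] = 4$ and $\mathbb{Z}[\theta] \subseteq \mathcal{O}_L$ is a subring of finite index. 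The standard relation is $\mathrm{disc}(f) = [\mathcal{O}_L : \mathbb{Z}[\theta]]^2 \cdot d_L$, where $d_L$ is the discriminant of $L$. Computing $\mathrm{disc}(x^4 - 2)$ via the resultant $\mathrm{Res}(f, f')$ with $f'(x) = 4x^3$: one gets $\mathrm{disc}(f) = (-1)^{4\cdot3/2} N_{L/\mathbb{Q}}(f'(\theta))/\mathrm{lc}(f) = \mathrm{Res}(f,f')$, which evaluates to $-2^{11} = -2048$ (up to the routine sign bookkeeping, $\mathrm{disc}(x^n - a) = (-1)^{(n-1)(n-2)/2} n^n a^{n-1}$ up to sign, giving $\pm 4^4 \cdot 2^3 = \pm 2^{11}$). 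Hence $[\mathcal{O}_L : \mathbb{Z}[\theta]]^2$ divides $2^{11}$, so the index is a power of $2$, and it suffices to show $2 \nmid [\mathcal{O}_L : \mathbb{Z}[\theta]]$.

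To rule out the prime $2$, I would invoke the Eisenstein criterion for total ramification (or Dedekind's criterion at the prime $2$): since $f(x) = x^4 - 2$ is Eisenstein at $2$, the prime $2$ is totally ramified in $L$, the prime $\theta$ generates the unique prime above $2$, and $\mathbb{Z}_2[\theta]$ is the full ring of integers of the completion $L_{(2)}$ — equivalently, $2$ does not divide the index $[\mathcal{O}_L : \mathbb{Z}[\theta]]$. Concretely, Dedekind's criterion checks that the factorization $\bar{f}(x) = x^4 \pmod 2$ lifts appropriately: writing $f = g \cdot h + 2 \cdot r$ with $g = x$, $h = x^3$, $r = -1$, one verifies $\gcd(\bar{g}, \bar{h}, \bar{r}) = \gcd(x, x^3, 1) = 1$ in $\mathbb{F}_2[x]$, so $2$ does not divide the index. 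Combining the two steps, $[\mathcal{O}_L : \mathbb{Z}[\theta]] = 1$, i.e. $\mathcal{O}_L = \mathbb{Z}[\sqrt[4]{2}]$.

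The main obstacle — really the only subtle point — is the discriminant computation and getting the sign/power of $2$ exactly right; everything downstream only needs that the index is a $2$-power, so even a slightly loose bound there is harmless. The Eisenstein/Dedekind step at $2$ is then essentially mechanical. Since the excerpt already cites \cite[Theorem 3.1]{Conrad} for this lemma, in the write-up I would either reproduce this short argument or simply defer to that reference; I would lean toward including the two-line Eisenstein-plus-discriminant argument for self-containedness, as the rest of Section \ref{sec:prelim} is written in that explicit elementary style.
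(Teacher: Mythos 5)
Your proposal is correct: the discriminant of $x^4-2$ is $-2^{11}$, so the index $[\mathcal{O}_L:\mathbb{Z}[\sqrt[4]{2}]]$ is a power of $2$, and the Eisenstein/Dedekind criterion at $2$ (with $\bar f = x^4$, $g=x$, $h=x^3$, $\bar r = 1$, so the gcd is $1$) rules out that prime, forcing the index to be $1$. The paper itself gives no proof of this lemma, deferring entirely to the cited reference \cite[Theorem 3.1]{Conrad}, and your argument is the standard one carried out there, so there is no divergence to report.
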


\subsection*{Units} An element of $\mathcal{O}_L$ is called a \emph{unit} if it has a multiplicative inverse. Denote $\mathcal{U}$ the set of units in $\mathcal{O}_L$. The Dirichlet's unit theorem states that the group of units $\mathcal U\subset \mathcal O_L$ is finitely generated and the rank is equal to $r_1+r_2-1$, where $r_1$ is the number of real embeddings and $r_2$ is the number of conjugate pairs of complex embeddings of $L$ into $\mathbb C$. Now if we restrict our attention to the case $L=\mathbb{Q}[\sqrt[4]{2}]$, and denote $\tau$ the Galois automorphism that sends $\sqrt[4]{2}$ to $-\sqrt[4]{2}$, then $\mathcal U$ has rank two, and the following subset of units
\[\mathcal U_0=\{u\in \mathcal U\;|\;\tau(u)\cdot u=1\}\]
is rank one, which can be explicitly given by the following lemma.

\begin{lemma} \label{lem:fundamental unit}
We have $\mathcal U_0=\{\pm u_0^k\;|\;k\in \mathbb Z\}$, where $u_0=(3+2\sqrt 2)+(2+2\sqrt 2)\sqrt[4]{2}$.
\end{lemma}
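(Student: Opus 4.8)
The plan is to identify $\mathcal{U}_0$ with the solution set of a single Pell-type equation over $\mathbb{Z}[\sqrt{2}]$, and then to pin down the fundamental solution by a short finite search using the archimedean embeddings of $L=\mathbb{Q}[\sqrt[4]{2}]$.

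First I would set up the dictionary. The fixed field of $\tau$ is $\mathbb{Q}[\sqrt{2}]$, and by Lemma~\ref{lem:integer} one has $\mathcal{O}_L=\mathbb{Z}[\sqrt[4]{2}]=\mathbb{Z}[\sqrt{2}]\oplus\mathbb{Z}[\sqrt{2}]\cdot\sqrt[4]{2}$. Writing a unit as $v=x+y\sqrt[4]{2}$ with $x,y\in\mathbb{Z}[\sqrt{2}]$ gives $\tau(v)=x-y\sqrt[4]{2}$, hence $\tau(v)v=x^2-\sqrt{2}\,y^2$; and since $N_{L/\mathbb{Q}}(x+y\sqrt[4]{2})=N_{\mathbb{Q}[\sqrt{2}]/\mathbb{Q}}(x^2-\sqrt{2}\,y^2)$, any $x,y\in\mathbb{Z}[\sqrt{2}]$ with $x^2-\sqrt{2}\,y^2=1$ conversely yield a unit. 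Thus
\[
\mathcal{U}_0=\{\,x+y\sqrt[4]{2}\ :\ x,y\in\mathbb{Z}[\sqrt{2}],\ x^2-\sqrt{2}\,y^2=1\,\}.
\]
A one-line check (with $x_0=3+2\sqrt2$, $y_0=2+2\sqrt2$ one gets $x_0^2=17+12\sqrt2$ and $\sqrt2\,y_0^2=16+12\sqrt2$) shows $u_0\in\mathcal{U}_0$; moreover the only roots of unity in $\mathcal{O}_L^\times$ are $\pm1$ since $L$ has a real place, and both lie in $\mathcal{U}_0$. Given that $\mathcal{U}_0$ has rank one (as recalled above), it then suffices to prove $u_0$ is a \emph{fundamental} solution, i.e.\ that no $v\in\mathcal{U}_0$ has $1<\sigma_1(v)<\sigma_1(u_0)$, where $\sigma_1$ is the real embedding with $\sqrt[4]{2}\mapsto 2^{1/4}$ (numerically $\sigma_1(u_0)=(3+2\sqrt2)+(2+2\sqrt2)2^{1/4}\approx 11.57$).

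For this I would use all three archimedean places of $L$: $\sigma_1$ as above, the conjugate real embedding $\sigma_2=\sigma_1\circ\tau$, and a complex place $\sigma_3$, whose restriction to $\mathbb{Q}[\sqrt2]$ is the non-trivial conjugation $\rho$. From $\tau(v)v=1$ we get $\sigma_1(v)\sigma_2(v)=1$ and $|\sigma_3(v)|^2=1$. Writing $x=p+q\sqrt2$ and $y=r+s\sqrt2$ with $p,q,r,s\in\mathbb{Z}$, the second relation reads $\rho(x)^2+\sqrt2\,\rho(y)^2=1$, so $|\rho(x)|\le1$ and $|\rho(y)|\le2^{-1/4}$; the first gives $\sigma_1(x)=\tfrac12(\sigma_1(v)+\sigma_1(v)^{-1})$ and $2^{1/4}\sigma_1(y)=\tfrac12(\sigma_1(v)-\sigma_1(v)^{-1})$. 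After replacing $v$ by a suitable one of $\pm v^{\pm1}\in\mathcal{U}_0$ we may assume $1<\sigma_1(v)<\sigma_1(u_0)$, whence $1<\sigma_1(x)<6$ and $0<2^{1/4}\sigma_1(y)<6$; combining with the bounds on $\rho(x),\rho(y)$ forces $p\in\{1,2,3\}$, $q\in\{1,2\}$, $r\in\{1,2\}$, $s\in\{1,2\}$. Expanding $x^2-\sqrt2\,y^2=1$ in the basis $\{1,\sqrt2\}$ yields
\[
p^2+2q^2-4rs=1,\qquad 2pq=r^2+2s^2,
\]
and going through the few admissible tuples leaves only $(p,q,r,s)=(3,2,2,2)$, i.e.\ $v=u_0$, which violates the strict inequality. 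Hence $u_0$ is fundamental and $\mathcal{U}_0=\{\pm u_0^k:k\in\mathbb{Z}\}$.

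The only laborious part is the last step: making the archimedean estimates sharp enough that the coefficient ranges are genuinely that small, and then verifying that the two displayed Diophantine equations eliminate every tuple except the one producing $u_0$. Everything else is formal, and if one prefers not to invoke the rank-one fact, the same finite search shows directly that $v\mapsto|\sigma_1(v)|$ maps $\mathcal{U}_0/\{\pm1\}$ isomorphically onto a discrete subgroup of $\mathbb{R}_{>0}$, which is therefore infinite cyclic.
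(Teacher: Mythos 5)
Your proposal is correct, and it shares the paper's central mechanism: write $v=x+y\sqrt[4]{2}$ with $x,y\in\mathbb{Z}[\sqrt 2]$, turn $\tau(v)v=1$ into the Pell-type equation $x^2-\sqrt 2\,y^2=1$, and apply the conjugation $\sqrt 2\mapsto -\sqrt 2$ to bound the conjugate coordinates. Where you genuinely diverge is in how the argument is finished. The paper never enumerates solutions: it only rules out $u_0$ being a proper power, by noting that once the four integer coefficients are forced to be positive, any nontrivial solution satisfies $v\geq (1+\sqrt 2)+(1+\sqrt 2)\sqrt[4]{2}\approx 5.285$, which is incompatible with $v^2\leq u_0<12$; combined with the rank-one statement (Dirichlet plus the unit $\sqrt 2-1$), this yields that $u_0$ generates. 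You instead prove fundamentality outright, sweeping the whole window $1<\sigma_1(v)<\sigma_1(u_0)\approx 11.57$ and using all three archimedean places to trap $(p,q,r,s)$ in a small box before solving $p^2+2q^2-4rs=1$ and $2pq=r^2+2s^2$. That search does close: $(r,s)=(1,1),(1,2)$ die by parity, $(2,1)$ forces $(p,q)=(3,1)$ and fails the first equation, and $(2,2)$ leaves only $(3,2,2,2)$, i.e.\ $u_0$ itself; your coefficient ranges are also right, provided you add the one-line remark that $r=0$ or $s=0$ forces $y=0$ and hence $v=\pm 1$, excluded by $\sigma_1(v)>1$. The trade-off: the paper's square-comparison trick is shorter and needs only the crude lower bound on a nontrivial solution, but it leans on the rank-one input and the reduction from ``not a proper power'' to ``generator''; your finite search is heavier on case-checking over a larger range, but it identifies the fundamental unit directly and, as you observe, can be made independent of Dirichlet's theorem via discreteness of $\left\{|\sigma_1(v)| : v\in\mathcal U_0\right\}$ in $\mathbb{R}_{>0}$.
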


\begin{proof}
First, we can check $\tau(u_0)\cdot u_0=1$, so $u_0$ is a unit. This implies $\{\pm u_0^k\;|\;k\in \mathbb Z\}\subset \mathcal U_0$. Secondly, $\sqrt 2-1$ is an element of infinite order in $\mathcal U$ which does not satisfy $\tau(u)\cdot u=1$, so the rank of $\mathcal U_0$ must be $1$. Notice that the only torsion elements in $\mathcal U$ (hence also in $\mathcal U_0$) are $\pm 1$, so it remains to show $u_0$ is a primitive generator, that is, not a power of any elements in $\mathcal U_0$.

Assume not, there exists $v\in \mathcal U_0$, and an integer $k>1$ such that $u_0=v^k$. By possibly switching the sign of $v$, we may assume $v>0$, so we have $1<v<u_0$. According to Lemma \ref{lem:integer}, we can write $v=(\alpha_1+\beta_1\sqrt 2)+(\alpha_2+\beta_2\sqrt 2)\sqrt[4]{2}$ where $\alpha_1,\alpha_2,\beta_1,\beta_2\in \mathbb Z$, the assumption $\tau(v)\cdot v=1$ now gives
$$(\alpha_1+\beta_1\sqrt 2)^2-(\alpha_2+\beta_2\sqrt 2)^2\sqrt 2=1.$$
Apply the Galois automorphism $\sigma:\sqrt 2\rightarrow -\sqrt 2$ on the above equation, we obtain
$$(\alpha_1-\beta_1\sqrt 2)^2+(\alpha_2-\beta_2\sqrt 2)^2\sqrt 2=1.$$
Thus $|\alpha_1-\beta_1\sqrt 2|\leq 1$ and $|\alpha_2-\beta_2\sqrt 2|<1$. If $\beta_2=0$, then $\alpha_2=0$. It follows that $\alpha_1-\beta_1 \sqrt 2=\pm 1$, which forces $\beta_1=0$ and $\alpha_1=\pm 1$, a contradiction. Similarly if $\beta_1=0$, it forces $\alpha_1=\pm 1$ and $\alpha_2=\beta_2=0$, which is again impossible. This implies that $\alpha_1, \beta_1$ must have the same sign and so do $\alpha_2, \beta_2$. Furthermore, $v>1$ implies $0<\tau(v)<1<v$, so $\alpha_1,\alpha_2,\beta_1,\beta_2>0$. Now since $\alpha_1,\alpha_2,\beta_1,\beta_2$ are all positive integers, the least possible $v$ is $(1+\sqrt 2)+(1+\sqrt 2)\sqrt[4]2\approx 5.285$ which contradicts with $12>u_0=v^k\geq v^2$. Therefore, $u_0$ is a primitive generator of $\mathcal U_0$, which completes the proof.
\end{proof}

\subsection*{The Grunwald-Wang theorem} The (Hasse) local-global principle plays a very important role in number theory. The idea comes down to that finding rational solutions for certain equations is sometimes equivalent to finding $p$-adic solutions for all prime $p$. The Grunwald-Wang theorem is a version of such principle for the special family of polynomial equations $x^m=a$.

Let $L$ be a number field, we denote by $\zeta_r$ a primitive $2^r$-th root of unity, and $\eta_r=\zeta_r+\zeta_r^{-1}$. Let $s\geq 2$ be the integer such that $\eta_s\in L$ but $\eta_{s+1}\notin L$. 

\begin{theorem}[Grunwald-Wang]\cite[Chapter 10]{Artin-Tate}
Let $m$ be an integer, $S$ be a finite set of primes and $P(m,S)$ the group of all $a\in L$ such that $a\in L_p^m$ for all $p\notin S$. Then $P(m,S)= L^m$ except when the following conditions are all satisfied (which is referred to as the special case):
\begin{enumerate}
\item $-1$, $2+\eta_s$ and $-(2+\eta_s)$ are non-squares in $ L$,
\item $m=2^tm'$ where $m'$ is odd and $t>s$.
\item $S_0\subset S$, where $S_0$ is the set of primes $p|2$ where $-1$, $2+\eta_s$ and $-(2+\eta_s)$ are non-squares in $ L_p$.
\end{enumerate} 
\end{theorem}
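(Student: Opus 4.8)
\textit{Proof proposal.}
This is a classical theorem of class field theory, and in this paper one simply invokes \cite[Chapter 10]{Artin-Tate}; I record below the shape of its proof. Write $\mu_m$ for the group of $m$-th roots of unity. The substance of the statement is that $P(m,S)/L^{m}$ is the ``restricted Tate--Shafarevich group'' $\ker\!\bigl(L^{\times}/L^{\times m}\rightarrow\prod_{p\notin S}L_{p}^{\times}/L_{p}^{\times m}\bigr)$, and, since $A/mA\cong\prod_{\ell\mid m}A/\ell^{v_{\ell}(m)}A$ for any abelian group $A$ (apply this to $L^{\times}$ and to each $L_{p}^{\times}$), the question splits over the prime divisors $\ell$ of $m$; hence we may assume $m=\ell^{c}$ is a prime power. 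As the argument will show, the only $\ell$ that can produce a failure is $\ell=2$, which is why conditions (1)--(3) refer only to $\eta_{s}$, to $2+\eta_{s}$, and to the dyadic places.

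Fix $m=\ell^{c}$ and let $a\in L^{\times}$ represent a class in $P(m,S)/L^{m}$. Put $K:=L(\mu_{m})$ and $\Delta:=\mathrm{Gal}(K/L)\hookrightarrow(\mathbb{Z}/m)^{\times}$, acting tautologically on $\mu_{m}$. If $a\notin K^{\times m}$, then $K(a^{1/m})/K$ is a nontrivial cyclic extension, and the Chebotarev density theorem applied to the Galois closure over $L$ of $K(a^{1/m})/L$ yields infinitely many primes $p$ of $L$ splitting completely in $K$ but not in $K(a^{1/m})$; for such $p$ one has $\mu_{m}\subset L_{p}$ and $a\notin L_{p}^{\times m}$ by Kummer theory, contradicting $a\in P(m,S)$ since $S$ is finite. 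Hence $a=b^{m}$ with $b\in K$; then $\sigma\mapsto\sigma(b)/b$ is a $1$-cocycle valued in $\mu_{m}$, and (using $H^{1}(\Delta,K^{\times})=0$) it defines a class $c_{a}\in H^{1}(\Delta,\mu_{m})$ with $a\in L^{\times m}$ if and only if $c_{a}=0$. For an unramified prime $p$ the decomposition group $\Delta_{p}\subset\Delta$ is cyclic (generated by Frobenius) and $a\in L_{p}^{\times m}$ if and only if $\mathrm{res}^{\Delta}_{\Delta_{p}}(c_{a})=0$; and by Chebotarev every cyclic subgroup of $\Delta$ occurs as some $\Delta_{p}$. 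Therefore, if $a$ is not a global $m$-th power, $c_{a}$ must be a nonzero class in $H^{1}(\Delta,\mu_{m})$ whose restriction to every cyclic subgroup of $\Delta$ vanishes. When $\ell$ is odd, $(\mathbb{Z}/m)^{\times}$ and hence $\Delta$ is cyclic, so no such $c_{a}$ exists and $P(m,S)=L^{m}$; this disposes of all odd $\ell$.

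It remains to treat $\ell=2$, say $m=2^{t}$, and this is the step I expect to be the main obstacle. Let $W\subset H^{1}(\Delta,\mu_{2^{t}})$ denote the kernel of the restriction map to $\prod_{C\ \mathrm{cyclic}}H^{1}(C,\mu_{2^{t}})$. Because $\eta_{s}\in L$ but $\eta_{s+1}\notin L$, the group $\Delta=\mathrm{Gal}(L(\zeta_{t})/L)$ is cyclic of order $\le 2$ when $t\le s$, and for $t>s$ it embeds into $\mathrm{Gal}(\mathbb{Q}(\zeta_{t})/\mathbb{Q}(\eta_{s}))\cong\mathbb{Z}/2\times\mathbb{Z}/2^{t-s}$ while surjecting onto $\mathrm{Gal}(L(\zeta_{s+1})/L)$; noting $L(\zeta_{s+1})=L(\sqrt{-1},\sqrt{2+\eta_{s}})$, a finite group-cohomology computation shows that $W\neq0$ precisely when $\Delta$ is non-cyclic, equivalently precisely when none of $-1$, $2+\eta_{s}$, $-(2+\eta_{s})$ is a square in $L$ (condition (1)) and $t>s$ (condition (2)), in which case $W\cong\mathbb{Z}/2$.

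Finally one must show the nonzero class of $W$ comes from a genuine global element with controlled local behaviour. The candidate is $a_{0}:=(2+\eta_{s})^{m/2}=\eta_{s+1}^{\,m}$ — Wang's classical example, which for $L=\mathbb{Q}$ and $m=8$ is $(\sqrt{2})^{8}=16$. Here $a_{0}=\eta_{s+1}^{m}$ is an $m$-th power in $K$, condition (1) forces $a_{0}\notin L^{\times m}$, and since $\sigma(\eta_{s+1})=\pm\eta_{s+1}$ for $\sigma\in\Delta$ the obstruction on any cyclic subgroup involves only a $2$-adic valuation $\le s<t$ (this is where condition (2) enters), so $c_{a_{0}}\in W$ and generates it. One then checks that $a_{0}\in L_{p}^{\times m}$ for every $p\nmid2$ (if the residue field of $L_{p}$ contains $\mu_{2^{t}}$ it contains $\zeta_{s+1}$, so $2+\eta_{s}$ is a square there; otherwise the $2$-adic valuation controlling the obstruction is again $<t$), and likewise $a_{0}\in L_{p}^{\times m}$ whenever one of $-1$, $\pm(2+\eta_{s})$ is a square in $L_{p}$, while $a_{0}\notin L_{p}^{\times m}$ exactly when $p\mid2$ and all of $-1$, $2+\eta_{s}$, $-(2+\eta_{s})$ are non-squares in $L_{p}$ — that is, exactly when $p\in S_{0}$. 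Hence $a_{0}\in P(m,S)$ if and only if $S\supseteq S_{0}$ (condition (3)); combined with $|W|\le2$ this gives $P(m,S)=L^{m}$ unless (1)--(3) all hold, and $P(m,S)/L^{m}\cong\mathbb{Z}/2$ otherwise. The delicate part — and the one I expect to require the most care, especially over a base field $L$ other than $\mathbb{Q}$ — is the $2$-adic analysis at the dyadic places that pins down $S_{0}$, together with the finite but fiddly computation of $W$.
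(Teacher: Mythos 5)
The paper does not prove this theorem: it is quoted verbatim from Artin--Tate and used as a black box, only through the corollary for $S=\emptyset$ and $L=\mathbb{Q}[\sqrt[4]{2}]$, so there is no in-paper argument to compare yours against. Your outline is the standard Grunwald--Wang proof and its architecture is correct: reduce to prime-power exponent $m=\ell^{c}$; use Chebotarev plus Kummer theory to show that $P(m,S)/L^{\times m}$ injects into the subgroup $W\subset H^{1}(\Delta,\mu_{m})$ of classes killed by restriction to every cyclic subgroup (the finiteness of $S$ entering exactly where you say it does); observe $W=0$ whenever $\Delta$ is cyclic, which disposes of odd $\ell$ and of $t\le s$; and exhibit Wang's element $(2+\eta_{s})^{m/2}=\eta_{s+1}^{m}$ as the generator of $W$ in the special case, with the identification $L(\zeta_{s+1})=L(\sqrt{-1},\sqrt{2+\eta_{s}})$ translating non-cyclicity of $\Delta$ into condition (1). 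The logic of the endgame is also right: since $|W|\le 2$, any $a\in P(m,S)\setminus L^{\times m}$ differs from $a_{0}$ by a global $m$-th power, so the local behaviour of $a_{0}$ at the dyadic places decides everything and produces condition (3). Two steps remain assertions rather than proofs --- the group-cohomology computation that $W\cong\mathbb{Z}/2$ precisely when $\Delta$ is non-cyclic and $t>s$, and the dyadic analysis pinning down $S_{0}$ --- but you flag both explicitly, and they are precisely the technical core of Wang's correction to Grunwald. As justification for citing the theorem this is more than adequate; as a self-contained proof it is a correct skeleton with the two acknowledged gaps still to be filled from the reference.
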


In the case $ L=\mathbb Q[\sqrt[4]2]$, we will see in the following lemma that the exceptional set $S_0$ is non-empty, therefore the special case must have $S\neq \emptyset$.

\begin{lemma} If $ L= \mathbb Q[\sqrt[4]2]$, then $\sqrt[4]2\in S_0$.
\end{lemma}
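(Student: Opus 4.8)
We need to show that for $L = \mathbb{Q}[\sqrt[4]{2}]$, the element $\sqrt[4]{2}$ lies in $S_0$, the set of primes $p \mid 2$ where $-1$, $2+\eta_s$, and $-(2+\eta_s)$ are all non-squares in $L_p$. First I need to identify $s$: here $L$ is a real field (its real embeddings come from $\pm\sqrt[4]{2} \in \mathbb{R}$), so $i = \zeta_2 \notin L$, meaning $\eta_2 = \zeta_2 + \zeta_2^{-1} = 0 \in L$ but we need the largest $s$ with $\eta_s \in L$. Since $\eta_2 = 0$, $\eta_3 = \sqrt{2}$, and $\eta_4 = \sqrt{2+\sqrt{2}}$, and $\sqrt{2} \in \mathbb{Q}[\sqrt[4]{2}]$ while $\sqrt{2+\sqrt{2}} \notin \mathbb{Q}[\sqrt[4]{2}]$ (one can check $\mathbb{Q}(\sqrt{2+\sqrt 2})$ is a different degree-4 field, namely the real subfield of $\mathbb{Q}(\zeta_{16})$, which is cyclic over $\mathbb{Q}$ whereas $\mathbb{Q}[\sqrt[4]2]$ is not Galois), we get $s = 3$ and $2 + \eta_s = 2 + \sqrt{2}$.

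**The plan.** The first step is to understand the prime factorization of $2$ in $\mathcal{O}_L = \mathbb{Z}[\sqrt[4]{2}]$ (using Lemma~\ref{lem:integer}). Since $2 = (\sqrt[4]{2})^4$ and $\sqrt[4]{2}$ is a uniformizer, $2$ is totally ramified: $(2) = \mathfrak{p}^4$ where $\mathfrak{p} = (\sqrt[4]{2})$, so there is a unique prime above $2$, with residue field $\mathbb{F}_2$ and completion $L_{\mathfrak p}$ a totally ramified degree-$4$ extension of $\mathbb{Q}_2$ with uniformizer $\pi = \sqrt[4]{2}$. Thus "$\sqrt[4]2 \in S_0$" simply means: this unique prime $\mathfrak{p}$ above $2$ belongs to $S_0$, i.e., I must verify that none of $-1$, $2+\sqrt{2}$, $-(2+\sqrt 2)$ is a square in $L_{\mathfrak p}$. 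The second step is these three local non-squareness checks. For $-1$: it is a square in $L_{\mathfrak p}$ iff $i \in L_{\mathfrak p}$, i.e. iff $\mathbb{Q}_2(i) \subseteq L_{\mathfrak p}$; but $\mathbb{Q}_2(i)/\mathbb{Q}_2$ is ramified of degree $2$ and I should check it is not the (unique, since it equals $\mathbb{Q}_2(\zeta_4)$) sub-extension of $L_{\mathfrak p}$ — actually cleaner: $-1$ is a square in $L_{\mathfrak p}$ iff $-1 \in (L_{\mathfrak p}^\times)^2$; writing $-1$ as a unit in $\mathcal{O}_{L_{\mathfrak p}}$, reduce mod powers of $\pi$ and use that for a local field of residue characteristic $2$ the squares are cut out by congruence conditions. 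For $\pm(2+\sqrt 2)$: note $2 + \sqrt 2 = \sqrt{2}(1 + \sqrt 2) = \pi^2(1+\pi^2)$, which has $\mathfrak{p}$-adic valuation $2$ (even!), so it is a square in $L_{\mathfrak p}$ iff its unit part $1 + \pi^2$ is a square; similarly $-(2+\sqrt 2) = \pi^2 \cdot (-(1+\pi^2))$ is a square iff $-(1+\pi^2)$ is a square. So everything reduces to deciding squareness of the three \emph{units} $-1$, $1+\pi^2$, $-(1+\pi^2)$ in $L_{\mathfrak p}^\times$.

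**Carrying out the unit computations.** For this I would use the standard description of $(L_{\mathfrak p}^\times)^2$ for a $2$-adic field: with $e = e(L_{\mathfrak p}/\mathbb{Q}_2) = 4$, a principal unit $1 + x$ with $v_{\mathfrak p}(x) > 2e/(2-1) = 2e$... more precisely, $1 + \mathfrak{p}^{k}$ consists of squares once $k > 2e = 8$, and in the range below that one analyzes the squaring map on the graded pieces. Concretely I would compute the group $L_{\mathfrak p}^\times / (L_{\mathfrak p}^\times)^2$, which has order $2^{[L_{\mathfrak p}:\mathbb{Q}_2]+2} = 2^{6}$ (by the local field formula $|K^\times/(K^\times)^2| = 2 \cdot |2|_K^{-1} = 2 \cdot 2^{e}$ for $K \supseteq \mathbb{Q}_2$, here $= 2^{5}$; I'd pin down the exact exponent carefully), and then check the three classes are non-trivial by exhibiting a quadratic character (equivalently a ramified or unramified quadratic extension) separating each from $1$. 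A clean alternative: $-1$ is a square in $L_{\mathfrak{p}}$ iff $\mathbb{Q}_2(\sqrt{-1}) \hookrightarrow L_{\mathfrak p}$; since $L_{\mathfrak p}/\mathbb{Q}_2$ has a \emph{unique} quadratic subextension (because $\mathrm{Gal}$ or the lattice of subfields of a totally ramified degree-$4$ extension with uniformizer $\sqrt[4]2$ gives exactly $\mathbb{Q}_2(\sqrt 2)$ as the quadratic subfield), and $\mathbb{Q}_2(\sqrt{-1}) \neq \mathbb{Q}_2(\sqrt 2)$ inside $\overline{\mathbb{Q}_2}$ (their ratio $\sqrt{-2}$ would have to be in $\mathbb{Q}_2$, false since $-2$ is not a square in $\mathbb{Q}_2$), we conclude $-1 \notin (L_{\mathfrak p}^\times)^2$. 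The same idea handles $\pm(2+\sqrt2)$ once reduced to $\pm(1+\pi^2)$: show $\sqrt{1+\pi^2}$ and $\sqrt{-(1+\pi^2)}$ each generate a quadratic extension of $L_{\mathfrak p}$, e.g. by showing $1+\pi^2$ and $-(1+\pi^2)$ are not squares mod $\mathfrak{p}^{N}$ for $N = 2e+1 = 9$ — a finite check in $\mathbb{Z}[\sqrt[4]2]/(\sqrt[4]2)^9$.

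**Main obstacle.** The delicate point is the squaring analysis of principal units in the $2$-adic field $L_{\mathfrak p}$ with ramification index $4$: unlike the odd-residue-characteristic case, squareness is \emph{not} detected modulo $\mathfrak p$ or even modulo a small power, and one must track the squaring map through the filtration $U^{(k)} = 1 + \mathfrak{p}^k$ up to level roughly $2e$. So the heart of the proof is a careful, and somewhat tedious, finite computation in $\mathbb{Z}[\sqrt[4]2]$ modulo a bounded power of $\sqrt[4]2$ verifying that $-1$, $1 + (\sqrt[4]2)^2$, and $-(1 + (\sqrt[4]2)^2)$ are non-squares there; the subfield-lattice argument above is the tool I would use to organize it and avoid brute force. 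Once these three local non-squareness statements are established, the conclusion $\sqrt[4]2 \in S_0$ is immediate from the definition of $S_0$, since $\mathfrak{p} = (\sqrt[4]2)$ is the unique prime dividing $2$.
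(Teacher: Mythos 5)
Your setup and reduction are sound and essentially match the paper's strategy: you correctly compute $s=3$, $\eta_s=\sqrt 2$, observe that $(2)=(\sqrt[4]2)^4$ is totally ramified so that $(\sqrt[4]2)$ is the unique prime above $2$, and reduce the statement to showing that $-1$ and $\pm(2+\sqrt 2)$ are non-squares in $L_{\sqrt[4]2}$; your factorization $2+\sqrt 2=\pi^2(1+\pi^2)$ and the observation that squareness of a unit in a $2$-adic field is decided by a congruence modulo a bounded power of the uniformizer are both correct. This is the same route the paper takes: it notes that a square in $\mathcal O_{\sqrt[4]2}$ must reduce to a square in $\mathcal O/(\sqrt[4]2)^k$ for every $k$, and then verifies non-squareness in the finite ring $\mathcal O/(4)\cong\mathbb Z[x]/(x^4-2,4)$, i.e.\ modulo $\mathfrak p^8$ (note that for the direction actually needed one does not need the sharp bound $2e+1=9$; any modulus at which non-squareness can be certified suffices).

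The gap is that the decisive step is never carried out. You explicitly defer the finite verification that $-1$, $1+\pi^2$ and $-(1+\pi^2)$ are non-squares modulo $\mathfrak p^{9}$ as ``tedious,'' and the shortcut you offer in its place --- that $L_{\mathfrak p}=\mathbb Q_2(\sqrt[4]2)$ has $\mathbb Q_2(\sqrt 2)$ as its \emph{unique} quadratic subextension --- is asserted, not proved. That uniqueness is not free: a quartic $2$-adic field can perfectly well contain several quadratic subfields (e.g.\ $\mathbb Q_2(\zeta_8)$ is totally ramified of degree $4$ and contains three), and ruling this out for $\mathbb Q_2(\sqrt[4]2)$ amounts to showing $\mathbb Q_2(\sqrt[4]2)\ne\mathbb Q_2(\zeta_8)$, which is essentially the assertion $i\notin L_{\mathfrak p}$ you are trying to prove; moreover it would not by itself handle $\pm(1+\pi^2)$, which are not rational. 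So the proposal reduces the lemma correctly but does not close it. The paper closes it with a short explicit computation: reducing mod $2$ shows that any square root of $-1$ (resp.\ of $\pm(2+x^2)$) in $\mathbb Z[x]/(x^4-2,4)$ would have to be congruent mod $2$ to one of $1,\,1+x^2,\,1+x^2+x^3$ (resp.\ $x,\,x+x^2,\,x+x^2+x^3$), and since $(\beta+2\delta)^2=\beta^2$ in this ring, checking those finitely many squares settles all three cases.
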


\begin{proof}
It is clear that $s=3$ and $\eta_s=\sqrt 2$. So we need to show $-1, \pm(2+\sqrt 2)$ are non-squares in the $p$-adic field $L_{\sqrt[4]2}$. As $-1, \pm(2+\sqrt 2)$ are all $p$-adic integers, it suffices to show they are not the square of any element in $\mathcal O_{\sqrt[4]2}$ where $\mathcal O=\mathbb Z[\sqrt[4]2]$. Note that $\mathcal O_{\sqrt[4]2}$ by definition is the inverse limit of the rings $\mathcal O/{(\sqrt[4]2)^k}$, so if a $\sqrt[4]2$-adic integer is a square in $\mathcal O_{\sqrt[4]2}$, then it is also a square in $\mathcal O/{(\sqrt[4]2)^k}$ for any natural number $k$, and in particular it is a square in $\mathcal O/{(4)}\cong \mathbb Z[x]/(x^4-2,4)$. We show that $-1, \pm(2+\sqrt 2)$ are non-squares in $\mathcal O/{(4)}$ thus the lemma follows.

Take the mod $2$ ring homomorphism $\varphi:\mathbb Z[x]/(x^4-2,4)\rightarrow \mathbb Z[x]/(x^4,2)$, and we put a bar to indicate the image under this homomorphism (for example, $\bar\alpha(x)=\varphi(\alpha(x))$). We can further identify the image $\mathbb Z[x]/(x^4,2)$ as the set of integral polynomials with $\{0,1\}$-coefficient (together with the relation $x^4=0$), which can be viewed as a subset in $\mathbb Z[x]/(x^4-2,4)$. We claim that $\alpha(x)=\beta^2(x)$ in $\mathbb Z[x]/(x^4-2,4)$ if and only if $\alpha(x)=\bar\beta^2(x)$ in $\mathbb Z[x]/(x^4-2,4)$. Indeed, $\ker(\varphi)=(2)$ implies that $\beta(x)$ and $\bar\beta(x)$ differs by $2\delta(x)$ for some $\delta(x)\in \mathbb Z[x]/(x^4-2,4)$. So we can check $\beta^2(x)=(\bar\beta(x)+2\delta(x))^2=\bar\beta^2(x)+4\bar\beta(x)\delta(x)+4\delta^2(x)=\bar\beta^2(x)$ in $\mathbb Z[x]/(x^4-2,4)$. Use this fact, we can deduce to much less possibilities in checking a square.

If $-1$ were a square $\beta_1^2(x)$ in $\mathbb Z[x]/(x^4-2,4)$, then under the image of $\varphi$, $1=\bar\beta_1^2(x)$ in $\mathbb Z[x]/(x^4,2)$. Hence the only possibilities for $\bar\beta_1(x)$ are $1,1+x^2, 1+x^2+x^3$. But none of the squares is $-1$ in $\mathbb Z[x]/(x^4-2,4)$. Similarly, if $\pm(2+x^2)$ (which represents $\pm(2+\sqrt 2)$ in $\mathcal O/{(4)}$) were a square $\beta_2^2(x)$ in $\mathbb Z[x]/(x^4-2,4)$, then under the image of $\varphi$, $x^2=\bar\beta_2^2(x)$ in $\mathbb Z[x]/(x^4,2)$. Hence the only possibilities for $\bar\beta_2(x)$ are $x,x+x^2, x+x^2+x^3$, but again none of the squares is $\pm(2+x^2)$ in $\mathbb Z[x]/(x^4-2,4)$. This shows that $-1, \pm(2+\sqrt 2)$ are non-squares in $\mathcal O/{(4)}$ and therefore they are non-squares in $L_{\sqrt[4]2}$.
\end{proof}

Now we apply the Grunwald-Wang theorem to the case $L=\mathbb Q[\sqrt[4]2]$ and $S=\emptyset$, and from the above lemma we see that the exceptional condition $(3)$ is not satisfied. Thus we have $P(m,S)= L^m$ for any $m$. In other words, the following corollary holds. We will only use this version in the proof later.

\begin{corollary}\label{cor:Grunwald-Wang}
 Let $L=\mathbb Q[\sqrt[4]2]$, and $m$ be any positive integer. If $a$ is an element in $L$ such that the equation $x^m=a$ has a solution in $L_p$ for every prime ideal $p$, then $x^m=a$ also has a solution in $L$.
\end{corollary}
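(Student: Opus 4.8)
The plan is to obtain this as an immediate consequence of the Grunwald--Wang theorem, applied with $L=\mathbb Q[\sqrt[4]2]$ and with $S$ the empty set of primes (the set at which no condition is imposed), together with the preceding lemma that $\sqrt[4]2\in S_0$.

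First I would translate the hypothesis into the language of the theorem: the assumption that $x^m=a$ is solvable in $L_p$ for every prime ideal $p$ says exactly that $a\in L_p^m$ for all $p\notin S$ when $S=\emptyset$; that is, $a\in P(m,\emptyset)$. (If one uses the convention that the theorem's $p$ ranges over all places and $S$ is allowed to contain the archimedean ones, take $S$ to be the set of archimedean places instead — the hypothesis still imposes nothing there, so again $a\in P(m,S)$.) In either reading, $S$ is a finite set of primes and $a\in P(m,S)$.

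Next I would check that we are not in the special case of the theorem, for which it suffices that one of the three listed conditions fail. Condition (3) requires $S_0\subseteq S$. But the ideal $(\sqrt[4]2)$ divides $2$, since $(\sqrt[4]2)^4=(2)$, and the previous lemma shows $\sqrt[4]2\in S_0$; as $S$ contains no finite prime, $S_0\not\subseteq S$, so condition (3) is violated. Hence the special case does not occur, and the theorem gives $P(m,S)=L^m$. Therefore $a\in L^m$, which is precisely the assertion that $x^m=a$ has a solution in $L$.

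There is essentially no real obstacle here: all the substance has already been packaged into the earlier lemma computing $S_0$. The only mild point requiring care is the bookkeeping just carried out — confirming that the hypothesis is exactly the membership $a\in P(m,S)$, that failure of a single one of the special-case conditions (here condition (3)) is enough, and that the archimedean places play no role because $S_0$ consists entirely of finite primes dividing $2$ while the excluded set $S$ contains only archimedean places, or is empty.
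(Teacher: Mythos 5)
Your proposal is correct and follows exactly the paper's argument: the paper also applies the Grunwald--Wang theorem with $S=\emptyset$ and uses the preceding lemma ($\sqrt[4]2\in S_0$) to rule out the special case via condition (3), concluding $P(m,S)=L^m$. The extra bookkeeping you include about archimedean places is harmless and does not change the argument.
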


\section{The constructions}\label{sec:construction}

We start with the following explicit algebraic construction of cocompact lattices in $\SL_n(\mathbb R)$. They are higher dimensional versions of \cite[Example 6.3.2]{Mor15}, which is also discussed in much details in \cite{Ben09}.

\subsection{A cocompact lattice}\label{sec:lattice}
Let $L=\mathbb{Q}[\sqrt[4]{2}]$ be the number field and $\mathcal{O}_L=\mathbb{Z}[\sqrt[4]{2}]$ be the ring of integers of $L$. Denote $\tau \colon L \rightarrow L$ the field automorphism that sends $\sqrt[4]{2} \mapsto -\sqrt[4]{2}$. Set the quadratic form

\[
D_n= \left(\begin{array}{cccc}
-1 &  &  &  \\ 
 & \sqrt 2 &  &  \\ 
 &  & \ddots &  \\ 
 &  &  & \sqrt 2
\end{array}\right).
\]
Then the following subgroup 
\[\Gamma_n \colon = \{g \in \SL_n(\mathcal{O}_L) \;|\;\tau(g^T)D_n g = D_n\}\]
is a cocompact lattice in $\SL_n(\mathbb{R})$ \cite[Corollary 2.18]{Ben09}.

\subsection*{Congruence subgroups}

By Selberg's lemma, there exists a torsion free, finite index subgroup $\Gamma <\Gamma_n$, which corresponds to a closed locally symmetric manifold $M= \Gamma\backslash X_n$, where $X_n=\SL_n(\mathbb R)/\SO(n)$ is the associated symmetric space. 

Arithmetic lattices have many finite index subgroups that inherits certain algebraic structures. Indeed, for any prime ideal $p\subset \mathcal O_L$ and for any positive integer $k$, we have the natural ``mod $p^k$'' homomorphism
\[\varphi_{p^k} \colon \SL_n(\mathcal O_L) \longrightarrow \SL_n(\mathcal O_L/p^k)\]
where the target is a finite group. So by the group isomorphism theorems, the group
\[ \Gamma(p^k) = \Gamma \cap \Ker(\varphi_{p^k})\]
is a finite index normal subgroup of $\Gamma$. Geometrically, the corresponding closed manifold
\[ M'= \Gamma(p^k)\backslash X_n \]
is a finite degree cover of $M$.


\subsection{The cycles}\label{sec:cycles}
We are going to construct two cycles on $M$ which are represented by two closed totally geodesic submanifolds in the complementary dimensions.

\subsection*{Flats and tori} The diagonal matrices in $\SL_n(\mathbb R)$ corresponds to a totally geodesic maximal flat $F\subset X_n$ of dimension $(n-1)$. Let $u_0\in \mathcal O_L$ be the unit as in Lemma \ref{lem:fundamental unit} which satisfies $\tau(u_0)\cdot u_0=1$. Then it follows that the subset
\[
A_n'=\{ \left(\begin{array}{cccc}
u_0^{k_1}  &  &  &  \\ 
 &  u_0^{k_2}&  &  \\ 

 &  &  \ddots &  \\ 
 &  &  & u_0^{k_n}
\end{array}\right)\colon \sum_{i=1}^n k_i=0,\; k_i \in \mathbb{Z}
\}
\]
is a subgroup of $\Gamma_n$ that is isomorphic to $\mathbb{Z}^{n-1}$. Intersecting with $\Gamma$, we obtain a finite index subgroup $A_n<A_n'$ which is also abstractly isomorphic to $\mathbb{Z}^{n-1}$. It acts cocompactly by isometry on the flat $F$, thus passing down to the quotient, the natural inclusion $F\subset X_n$ induces a totally geodesic embedding $A_n\backslash F\subset \Gamma\backslash X_n$. In particular, the image is a closed isometrically embedded $(n-1)$-torus in $\Gamma\backslash X_n$, which we denote by $T$.

\subsection*{Totally geodesic $X_{n-1}\times \mathbb{R}$} The following block diagonal matrices
\[
\left(\begin{array}{c|c}
 \dfrac{1}{t}\cdot\SL_{n-1}(\mathbb R) & 0 \\
 \hline 
 0 &  t^{(n-1)} \\ 
\end{array}\right),t>0\\
\]
form a Lie subgroup in $\SL_n(\mathbb R)$ which is isomorphic to $\SL_{n-1}(\mathbb R)\times \mathbb R$. It corresponds to a totally geodesic submanifold $X_{n-1}\times \mathbb R\subset X_n$ whose dimension equals $\dim(X_n)-\rank(X_n)$. It is clear that the following subset
\[
B_n'=\{ \left(\begin{array}{c|c}
 \dfrac{1}{u_0^{k}}\cdot\Gamma_{n-1} & 0 \\
 \hline 
 0 &  u_0^{k(n-1)} \\ 
\end{array}\right),k\in \mathbb Z
\}
\]
is a subgroup of $\Gamma_n$ which is isomorphic to $\Gamma_{n-1}\times \mathbb Z$. Intersecting with $\Gamma$, we obtain a finite index subgroup $B_n<B_n'$, which also acts cocompactly by isometry on $X_{n-1}\times \mathbb R$. Thus the inclusion $X_{n-1}\times \mathbb R\subset X_n$ induces on the quotient a totally geodesic embedding from $B_n\backslash (X_{n-1}\times \mathbb R)$ to $\Gamma\backslash X_n$. The image is an isometric copy of $N\times S^1$ where $N$ is a closed locally symmetric manifold modeled on $\SL_{n-1}(\mathbb R)$. We denote the image by $H$.


\subsection{Transverse intersections}\label{sec:transverse} One way to show a submanifold $H$ on $M$ represents a non-trivial homology class is to find a transverse submanifold $T$ of the complementary dimension, such that the intersection number $i(H,T)\neq 0$. 

\subsection*{On the universal cover} As a first step, we would like to realize this in the universal cover so that the two lifts $\widetilde{H}$ and $\widetilde{T}$ intersect transversely. We have seen in Section \ref{sec:cycles} two totally geodesic copies $F$ and $X_{n-1}\times \mathbb R$ whose dimensions add up to $\dim(X_n)$, but according to the construction, $F$ is contained in $X_{n-1}\times \mathbb R$. We are going to take a suitable rational conjugate $F'$ of $F$ so that $F'$ intersects $X_{n-1}\times \mathbb R$ transversely, as indicated by the following picture.

\begin{figure}[ht]
		\centering
		\includegraphics[width=0.7\linewidth]{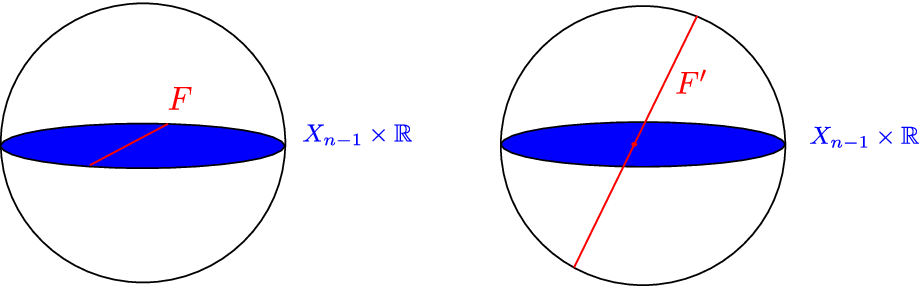}
		\caption{Transverse intersection on the universal cover}
\end{figure}

To see this, we fix a basepoint $p\in X_n$ that corresponds to $eK\in G/K$. The Cartan decomposition $\mathfrak g=\mathfrak k+\mathfrak p$ identifies the tangent space $T_p X_n$ with $\mathfrak p=\{A\in \mathfrak{sl}_n(\mathbb R):A^t=A\}$. Denote $\mathfrak a$ and $\mathfrak p_1$ the tangent subspaces of $T_p(F)$ and $T_p(X_{n-1}\times \mathbb R)$ as described above. Then $\mathfrak a$ consists of diagonal matrices with trace zero, and $\mathfrak p_1=\{A\in \mathfrak p:[A,u]=0\}$ where 
\[u= \left(\begin{array}{c|c}
 I^{(n-1)} & 0 \\
 \hline 
 0 &  -(n-1) \\ 
\end{array}\right)
\]
is a tangent vector corresponding to the singular $\mathbb R$-factor. We show that we can always move $\mathfrak a$ off $\mathfrak p_1$ by an isometry in $K$.

\begin{lemma} There exists $k\in K$ such that $\operatorname{Ad}(k)\mathfrak a\cap \mathfrak p_1=0$.
\end{lemma}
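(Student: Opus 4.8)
The plan is to make both tangent subspaces explicit inside $\mathfrak p=\{A\in\mathfrak{sl}_n(\R):A^t=A\}$ and then write down one $k$ that works. For $k\in K=\SO(n)$ the adjoint action on $\mathfrak p$ is conjugation $A\mapsto kAk^t$, so, denoting by $v_i:=ke_i$ the columns of $k$, we have $\operatorname{Ad}(k)\mathfrak a=\{\sum_{i=1}^n\lambda_i v_iv_i^t:\lambda_i\in\R,\ \sum_i\lambda_i=0\}$, i.e. the symmetric traceless matrices whose orthonormal eigenbasis is $v_1,\dots,v_n$. On the other hand $u=\operatorname{diag}(I^{(n-1)},-(n-1))$ has exactly two eigenvalues, with eigenspaces $\operatorname{span}(e_1,\dots,e_{n-1})$ and $\R e_n$, so $[A,u]=0$ forces $A$ to be block diagonal for this splitting; for symmetric $A$ this amounts to the single condition $Ae_n\in\R e_n$, i.e. $\mathfrak p_1=\{A\in\mathfrak p:Ae_n\parallel e_n\}$. (As a check, $\dim\mathfrak a+\dim\mathfrak p_1=(n-1)+\binom n2=\dim\mathfrak p$, so a trivial intersection is the same as a direct-sum decomposition $\mathfrak p=\operatorname{Ad}(k)\mathfrak a\oplus\mathfrak p_1$, though I will argue triviality directly.)

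Next I would choose $k\in\SO(n)$ whose last row $(k_{n1},\dots,k_{nn})$ has all entries nonzero — for instance a rotation with $k^te_n=\tfrac1{\sqrt n}(1,\dots,1)^t$. Such a $k$ exists because any unit vector of $\R^n$ occurs as the last row of some element of $\SO(n)$ and vectors with no zero coordinate are abundant; this existence is the only ``obstacle'', and it is essentially free. The content is the verification: if $A=\sum_i\lambda_i v_iv_i^t$ lies in $\operatorname{Ad}(k)\mathfrak a\cap\mathfrak p_1$, then $\sum_i\lambda_i=0$ and $Ae_n=c\,e_n$ for some $c\in\R$. I would compute $\langle v_i,e_n\rangle=(ke_i)_n=k_{ni}$, expand $e_n=\sum_i k_{ni}v_i$ in the orthonormal basis $\{v_i\}$, so that $Ae_n=\sum_i\lambda_i k_{ni}v_i$; comparing with $c\,e_n=\sum_i c\,k_{ni}v_i$ and using linear independence of the $v_i$ gives $(\lambda_i-c)k_{ni}=0$ for every $i$. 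Since no $k_{ni}$ vanishes, $\lambda_i=c$ for all $i$, whence $c=0$ by $\sum_i\lambda_i=0$ and $A=0$. Thus $\operatorname{Ad}(k)\mathfrak a\cap\mathfrak p_1=0$.

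I do not expect a genuine difficulty here: the lemma is a soft transversality statement, and the one thing to notice is that the block-diagonal constraint defining $\mathfrak p_1$ pins down only the single coordinate direction $e_n$, which is exactly why forcing the last row of $k$ to be nowhere zero suffices. I would also record that the same computation shows a \emph{generic} $k$ works: the set of bad $k$ is the vanishing locus, on the connected manifold $\SO(n)$, of a real-analytic function (a determinant built from fixed bases of $\mathfrak a$, $\mathfrak p_1$, and $\mathfrak p$ that detects whether $\operatorname{Ad}(k)\mathfrak a+\mathfrak p_1=\mathfrak p$), so once it is not identically zero — which the explicit $k$ above witnesses — it is nowhere dense. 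This openness and density is the form in which the lemma will be used for the transversality of the conjugate flat $F'$ with $X_{n-1}\times\R$ in the next step.
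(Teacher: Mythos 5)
Your proof is correct and is essentially the paper's argument viewed from the other side: the paper conjugates $\mathfrak p_1$ by $k$ and demands that the last column of $k$ have no zero entry (so that $kuk^{-1}$ has no vanishing off-diagonal entry, whereas a nonzero traceless diagonal matrix commuting with it would force one), while you conjugate $\mathfrak a$ and demand a nowhere-zero last row --- the two conditions correspond under $k\mapsto k^{-1}=k^{t}$. Your verification via the eigenvector expansion of $e_n$, the dimension count, and the closing genericity remark are all sound.
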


\begin{proof} We instead show there exists $k\in K$ such that $\operatorname{Ad}(k)\mathfrak p_1\cap \mathfrak a=0$. It is clear that
\[\operatorname{Ad}(k)\mathfrak p_1=\{A\in \mathfrak p:[A,kuk^{-1}]=0\}.\]
Suppose there is a non-zero matrix $v\in \operatorname{Ad}(k)\mathfrak p_1\cap  \mathfrak a$. Since $v$ is diagonal of zero trace, any matrix that commutes with $v$ need to have some zero on the off-diagonal entries. We show that we can choose a suitable $k\in \SO(n)$ such that $kuk^{-1}$ has no zero entry on the off-diagonal, which leads to a contradiction hence $\operatorname{Ad}(k)\mathfrak p_1\cap \mathfrak a=0$. We write $k$ as a block matrix
\[k= \left(\begin{array}{c|c}
O_{11} & O_{12} \\
 \hline 
 O_{21} &  O_{22} \\ 
\end{array}\right)
\]
so that $O_{11}$ is an $(n-1)\times (n-1)$ matrix and $O_{22}$ is a scalar. We compute
\[kuk^{-1}=kI^{(n)}k^{-1}-k\left(\begin{array}{c|c}
0^{(n-1)} & 0 \\
 \hline 
 0 &  n \\ 
\end{array}\right)k^{-1}=\left(\begin{array}{c|c}
I-nO_{12}O_{12}^t & -nO_{22}O_{12}\\
 \hline 
 -nO_{22}O_{12}^t &  1-nO_{22}^2 \\ 
\end{array}\right)\]
Thus we just need to pick any orthogonal matrix $k$ whose last column has no zero entry, and this certainly can be done. 
\end{proof}

If we apply the exponential map at $p$ to $\operatorname{Ad}(k)\mathfrak a$ and $\mathfrak p_1$, we obtain two totally geodesic submanifolds $F'$ and $X_{n-1}\times \mathbb R$ that intersect transversely at $p$, where $F'=kF$. The arithmetic construction of $\Gamma$ arises from the rational structure of $G$, and by \cite[Corollary 2.18]{Ben09}, we see that the subgroup
\[G_{\mathbb Q} =\{g \in \SL_n(L) \;|\;\tau(g^T)D_n g = D_n\}\]
corresponds to the rational points of $\SL_n(\mathbb R)\times \SU(n,\mathbb R)$, thus is dense in $\SL_n(\mathbb R)$. Since the result $gF$ intersects $X_{n-1}\times \mathbb R$ transversely is an open condition for $g\in G$, by the denseness there exists $g_0\in 
G_{\mathbb Q}$ such that $g_0F$ and $X_{n-1}\times \mathbb R$ intersect transversely. Moreover, since $g_0$ is rational, we see that $g_0\Gamma g_0^{-1}\cap \Gamma$ is a finite index subgroup of $\Gamma$. This implies that $g_0 A_n g_0^{-1}\cap \Gamma$ is a finite index subgroup of $g_0 A_n g_0^{-1}$, thus isomorphic to $\mathbb Z^{n-1}$. We denote $\overline{A_n}=g_0 A_n g_0^{-1}\cap \Gamma$, and it acts cocompactly on $F'$. The quotient manifold $\overline{A_n}\backslash F'$ is a totally geodesic embedded $(n-1)$-torus in $\Gamma\backslash X_n$, which we denote by $T'$.


\subsection*{On the quotient manifold} We have now obtained two totally geodesic submanifolds $T'$ and $H$ on $M$, such that they arise as the quotient of two transverse intersecting submanifolds $F'$ and $X_{n-1}\times \mathbb R$ on $X_n$. For simplicity, we still denote $T',F'$ by $T,F$. We denote the intersecting of $F'$ and $X_{n-1}\times \mathbb R$ by $\widetilde{x_0}$.

Passing to the quotient, we see that $T$ and $H$ intersect at least at one point $x_0$ transversely, but $T$ and $H$ might have other intersections (which we call \emph{extra intersections}) that might not even be transverse. Following \cite{ANP15}, we will use a double coset system to describe them.

\begin{figure}[ht]
		\centering
		\includegraphics[width=0.7\linewidth]{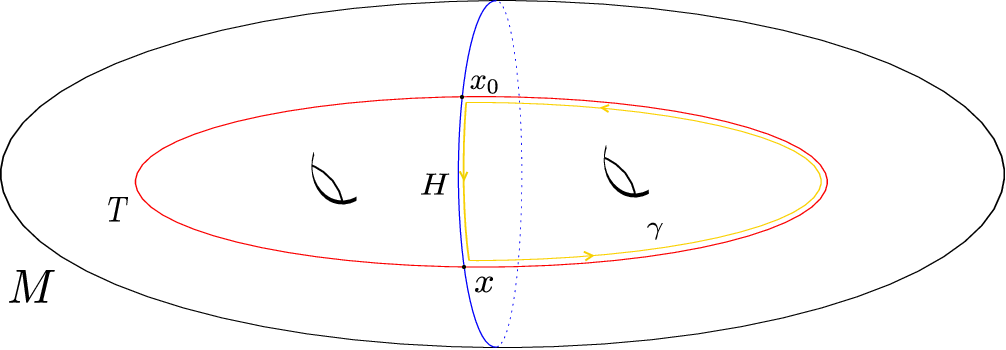}
		\caption{Double coset}
\end{figure}

Since both $T$ and $H$ are totally geodesic, the connected components of $T\cap H$ are also closed totally geodesic submanifolds. For each intersection $x\neq x_0$, we can make a loop at $x_0$ by first connecting $x_0$ to $x$ along $H$ and then travel back from $x$ to $x_0$ along $T$. This gives an element $\gamma\in \pi_1(M,x_0)\cong \Gamma$. But for a different choice of path connecting $x_0$ to $x$ along $H$, they differ by a left multiplication of an element in $\pi_1(x_0, H)$, and similarly, for a different choice of path connecting back from $x$ to $x_0$ along $T$, they differ by a right multiplication of an element in $\pi_1(x_0, T)$. Therefore, for each intersection $x$, it corresponds to a double coset
\[\pi_1(H)\cdot\gamma\cdot\pi_1(T),\]
which we simply denote by $[\gamma]$. It is clear that, if $x,x'$ belong to the same connected component of $T\cap H$, then the corresponding double cosets are the same. On the other hand, if $[\gamma]$ represents the connected component of an intersection $x$, then $\gamma \widetilde T\cap \widetilde H$ projects to $x$. Thus, if $x,x'$ belong to different components, then the corresponding double cosets are different.

\begin{lemma}\label{lem:finite} There are only finitely many connected components of $T\cap H$.
\end{lemma}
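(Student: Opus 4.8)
The plan is to parametrize the connected components of $T \cap H$ by double cosets $\pi_1(H) \backslash \Gamma / \pi_1(T)$, as explained above, and then to show that only finitely many double cosets actually arise from a genuine intersection. First I would lift everything to $X_n$: a component of $T \cap H$ containing a point $x$ with double coset label $[\gamma]$ corresponds to a nonempty intersection $\gamma \widetilde{T} \cap \widetilde{H} = \gamma F' \cap (X_{n-1} \times \mathbb{R})$ (after replacing $\gamma$ by a representative of its double coset), and conversely the set of $\gamma \in \Gamma$ for which $\gamma F' \cap (X_{n-1}\times \mathbb R) \neq \emptyset$ is exactly the union of the double cosets that label components. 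So it suffices to prove that this set of $\gamma$ meets only finitely many cosets in $\pi_1(H)\backslash \Gamma/\pi_1(T)$.

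The key geometric input is that $F'$ is a maximal flat and $X_{n-1}\times \mathbb R$ is a totally geodesic submanifold, both of which descend to \emph{closed} (hence compact) submanifolds $T$ and $H$ of the compact manifold $M$. Since $T$ and $H$ are compact, $T \cap H$ is a compact subset of $M$; because both are totally geodesic, $T\cap H$ is itself a totally geodesic (in particular, closed) submanifold of $M$, so it has only finitely many connected components. This is really the whole argument once the bookkeeping with double cosets is in place: compactness of $M$ together with the properness of the $\Gamma$-action forces finiteness. Concretely, pick compact fundamental domains $D_H \subset F'$ for the $\pi_1(H)$-action and $D_T \subset X_{n-1}\times\mathbb R$ for the $\pi_1(T)$-action (these exist since $\overline{A_n}$ acts cocompactly on $F'$ and $B_n$ acts cocompactly on $X_{n-1}\times\mathbb R$). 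If $\gamma F' \cap (X_{n-1}\times\mathbb R) \neq \emptyset$, then after left-multiplying $\gamma$ by an element of $\pi_1(H)$ and right-multiplying by an element of $\pi_1(T)$ we may assume this intersection point lies in a fixed compact set, which bounds $d(p, \gamma p)$ by a constant depending only on $D_H$, $D_T$ and the basepoint $p$. By properness of the $\Gamma$-action on $X_n$, only finitely many $\gamma \in \Gamma$ satisfy such a bound, so only finitely many double cosets occur.

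The main obstacle, and the only place that needs care, is the reduction of the intersection point to a fixed compact set: one must check that if $\gamma F'$ meets $X_{n-1}\times\mathbb R$ at all, then it meets it at a point that, after adjusting $\gamma$ within its double coset, lies within bounded distance of $p$. This follows because any point of $X_{n-1}\times\mathbb R$ is $\pi_1(T)$-translate-close to $D_T$ and any point of $\gamma F' = \gamma\,\overline{A_n}\,D_H$ is $\gamma\pi_1(H)$-translate-close to $\gamma D_H$; choosing the adjustments appropriately puts the intersection point simultaneously near $\gamma D_H$ and near $D_T$, which pins down $\gamma$ up to finitely many choices. Alternatively, and perhaps more cleanly, one can simply invoke that $T$ and $H$ are compact totally geodesic submanifolds of the complete manifold $M$: their intersection is a closed subset that is locally a finite union of totally geodesic submanifolds (since near any intersection point the picture is modeled on the transverse or non-transverse intersection of two linear subspaces in a symmetric space), and a compact set admitting such a local structure has finitely many components. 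I would present the double-coset version as the main argument since it is the one used in the sequel, and remark that compactness is what makes it work.
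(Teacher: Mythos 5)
Your argument is correct and essentially identical to the paper's: both reduce finiteness of the components of $T\cap H$ to finiteness of the relevant double cosets, and both obtain that by using compactness of $T$ and $H$ (you via compact fundamental domains for $\pi_1(T)$ and $\pi_1(H)$ on the lifts, the paper via diameter bounds on representing loops based at $x_0$) to confine a representative $\gamma$ of each coset to a set $\{\gamma : d(\widetilde{x_0},\gamma\widetilde{x_0})<C\}$, which is finite by discreteness of $\Gamma$. (Only a notational slip: your labels $D_H\subset F'$ and $D_T\subset X_{n-1}\times\mathbb R$ are swapped relative to which fundamental group acts on which lift, but the content is unaffected.)
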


\begin{proof}
Since both $H$ and $T$ are compact, there exists a constant $D>0$ that $\diam(H)<D$ and $\diam(T)<D$. Therefore, in view of the above construction, for any extra intersection $x$, we can always choose a representing loop such that the two connecting paths between $x_0$ and $x$ are both $< D/2$, so the total length of the representing loop is $<D$. Thus, any connected component of $T\cap H$ corresponds to a double coset $[\gamma]$ where $\gamma$ can be chosen in a finite set $\{\gamma\colon d(\widetilde{x_0},\gamma \widetilde{x_0})<D\}$. This completes the proof.
\end{proof}


\subsection{Sign of intersections} Extra intersections of $T$ and $H$ might not be transverse, and even if they do, they might take different sign which can possibly add up to a zero intersection number. We will need the following criterion which assures that the intersection is transverse and takes the same sign as the base intersection $x_0$.

\begin{lemma}\cite[Claim in Section 8]{ANP15} (See also \cite[Lemma 2]{ANP20})\label{lem:sign}
If an extra intersection $x$ has a representative (thus for all representatives) $\gamma$ which can be written as a product of two elements in $\SL_n(\mathbb R)$,
\[\gamma=a\cdot b\]
such that $a$ preserves $\widetilde{H}$ and its orientation and $b$ preserves $\widetilde{T}$ and its orientation, then $x$ is a transverse intersection and takes the same sign as $x_0$.
\end{lemma}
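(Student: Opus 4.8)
The plan is to reduce the local picture at $x$ to the one at the base intersection $\widetilde{x_0}$ by means of the isometry $a$. Writing $\gamma=ab$, the hypothesis that $b$ preserves $\widetilde T$ gives $\gamma\widetilde T=ab\widetilde T=a\widetilde T$, while the hypothesis that $a$ preserves $\widetilde H$ gives $\widetilde H=a\widetilde H$; hence
\[\gamma\widetilde T\cap\widetilde H\;=\;a\widetilde T\cap a\widetilde H\;=\;a\big(\widetilde T\cap\widetilde H\big)\;=\;\{a\,\widetilde{x_0}\}.\]
First I would recall from the discussion preceding Lemma \ref{lem:finite} that $\gamma\widetilde T\cap\widetilde H$ projects onto the connected component of $T\cap H$ determined by $[\gamma]$, so by the display that component is the single point $p(a\,\widetilde{x_0})=x$, where $p\colon X_n\to M$ is the covering projection. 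Transversality is then automatic: $a\in\SL_n(\mathbb R)$ acts on $X_n$ as a diffeomorphism and $\widetilde T\pitchfork\widetilde H$ at $\widetilde{x_0}$ by construction, so $\gamma\widetilde T=a\widetilde T$ meets $\widetilde H=a\widetilde H$ transversely at $a\,\widetilde{x_0}$; since $p$ is a local diffeomorphism sending these lifts to $T$ and $H$, the intersection $T\cap H$ is transverse at $x$, with local sign equal to the local sign of $\gamma\widetilde T\cap\widetilde H$ at $a\,\widetilde{x_0}$ computed with the lifted orientations. I would also note that the parenthetical ``for all representatives'' is harmless: any other representative of $[\gamma]$ has the form $h\gamma t$ with $h\in\pi_1(H)$, $t\in\pi_1(T)$, and $(h\gamma t)=(ha)(bt)$, where $h$ (resp.\ $t$), being a deck transformation that restricts to a deck transformation of the oriented manifold $\widetilde H$ (resp.\ $\widetilde T$), preserves that submanifold together with its orientation.

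The real content is the claim about the sign. Fix on $X_n$ the orientation pulled back from $M$. Since $\SL_n(\mathbb R)$ is connected it acts on $X_n$ by orientation-preserving maps, so $da$ carries the orientation of $T_{\widetilde{x_0}}X_n$ to that of $T_{a\widetilde{x_0}}X_n$. By hypothesis $a|_{\widetilde H}$ is orientation-preserving, so $da$ carries the orientation of $T_{\widetilde{x_0}}\widetilde H$ to that of $T_{a\widetilde{x_0}}\widetilde H$. The only delicate point is that $da$ carries the orientation of $T_{\widetilde{x_0}}\widetilde T$ to the orientation of $T_{a\widetilde{x_0}}(\gamma\widetilde T)$, where $\gamma\widetilde T$ is given the orientation it inherits as a lift of the oriented manifold $T$. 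I would establish this by factoring $a=\gamma b^{-1}$ and writing $da|_{\widetilde{x_0}}=d\gamma|_{b^{-1}\widetilde{x_0}}\circ db^{-1}|_{\widetilde{x_0}}$: the inner map $db^{-1}$ preserves the lifted orientation of $\widetilde T$ by the hypothesis on $b$, while the outer map $d\gamma$ preserves it because $\gamma$ is a deck transformation, so $p\circ\gamma=p$ forces $\gamma$ to send the lifted orientation on $\widetilde T$ to the lifted orientation on $\gamma\widetilde T$. Concatenating an oriented basis of $T_{\widetilde{x_0}}\widetilde H$ with an oriented basis of $T_{\widetilde{x_0}}\widetilde T$ (these span $T_{\widetilde{x_0}}X_n$ since $\widetilde H$ and $\widetilde T$ have complementary dimensions), the image under $da$ is a concatenation of oriented bases of $T_{a\widetilde{x_0}}\widetilde H$ and $T_{a\widetilde{x_0}}(\gamma\widetilde T)$ standing in the same orientation relation to $T_{a\widetilde{x_0}}X_n$; hence the local sign at $a\,\widetilde{x_0}$ equals the one at $\widetilde{x_0}$. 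Pushing down along $p$, the sign of $x$ equals the sign of $x_0$, as claimed.

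I expect the orientation bookkeeping of the second paragraph to be the only genuinely error-prone step: keeping straight the separate roles of $a$, of $b$, and of the deck transformation $\gamma$, and applying ``orientation-preserving'' consistently on $\widetilde H$, on $\widetilde T$, and on $X_n$. The reduction to the universal cover and the transversality claim are routine, and one should also double-check at the outset that $\widetilde H$ and $\widetilde T$ have complementary dimensions and that $M$, $H$, $T$ are oriented, so that the intersection signs are well defined in the first place.
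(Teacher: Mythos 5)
Your proof is correct and follows essentially the same route as the paper: compute $\gamma\widetilde T\cap\widetilde H=a(\widetilde T\cap\widetilde H)=a\widetilde{x_0}$ and observe that $a$ carries the oriented local configuration at $\widetilde{x_0}$ to the one at $a\widetilde{x_0}$. The only difference is that you spell out the orientation bookkeeping (via $a=\gamma b^{-1}$ and the deck-transformation property of $\gamma$) that the paper compresses into one sentence.
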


\begin{proof} For the completeness, we include a proof here. Lifting to the universal cover, the intersection $\gamma\widetilde{T}\cap \widetilde{H}$ is a lift of $x$, which we denote by $\widetilde{x}$. Since $\gamma=ab$, and $a,b$ preserves $\widetilde{H}, \widetilde{T}$ respectively, we can write
\[\widetilde{x}=\gamma\widetilde{T}\cap \widetilde{H}=ab\widetilde{T}\cap \widetilde{H}=a(\widetilde{T}\cap a^{-1}\widetilde H)=a(\widetilde{T}\cap\widetilde H)=a\cdot \widetilde{x_0}.\]
Thus, $a$ sends the local intersecting configuration $\widetilde{T}\cap\widetilde H$ around $\widetilde{x_0}$ orientation preserving isometrically to $\gamma\widetilde{T}\cap \widetilde{H}$ around $\widetilde{x}$. In particular, $\widetilde{x}$ is a transverse intersection and takes the same sign as $\widetilde{x_0}$. The same holds when projects down to $M$.
\end{proof}

\subsection{Interpreting as linear equations} If $[\gamma]$ represents an extra intersection, we are interested to know when the criterion of Lemma \ref{lem:sign} holds.
\begin{lemma}\label{lem:criterion} The criterion of Lemma \ref{lem:sign} holds if and only if the following equations can be solved for $a, b\in \operatorname{Mat}_n(\mathbb R)$,
\begin{enumerate}
\item[(1)] $\gamma=a\cdot b$,
\item[(2)] $b\cdot t=t\cdot b$,
\item[(3)] $a\cdot u=u\cdot a$,
\item[(4)] $\det a=1$,
\item[(5)] $a$ preserves the orientation of $\widetilde H$,
\end{enumerate}
where $t\in \Isom(\widetilde T)$ is any isometry with distinct eigenvalues, and $u=\diag(1,...,1,-(n-1))$ as in Section \ref{sec:transverse}.
\end{lemma}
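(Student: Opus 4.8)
The plan is to unwind what the criterion of Lemma \ref{lem:sign} asks for and match it term-by-term with the five equations. Recall that $\widetilde H$ is the exponential image of $\mathfrak p_1 = \{A\in\mathfrak p : [A,u]=0\}$, so an isometry of $X_n$ fixing the basepoint $p$ preserves $\widetilde H$ precisely when its differential commutes with $u$; more generally, by total geodesy, a matrix $a\in \SL_n(\mathbb R)$ acts on $\widetilde H$ (as a set) iff it normalizes the stabilizer subgroup, which for the $\SL_{n-1}(\mathbb R)\times\mathbb R$ block form is equivalent to $a$ being block-diagonal of the same shape, i.e. to the commutation relation $au=ua$. Hence condition (3) together with $\det a = 1$ (condition (4)) exactly says $a\in\SL_n(\mathbb R)$ preserves $\widetilde H$, and condition (5) upgrades this to preserving the orientation. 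First I would make this dictionary precise: I would state that $\Stab_{\SL_n(\mathbb R)}(\widetilde H)$ is the centralizer of $u$ intersected with $\SL_n(\mathbb R)$, and that the isometry group of $\widetilde T$ (a maximal flat through $p$) is, up to the compact rotation part, the full diagonal torus, whose elements commute with any fixed regular diagonal element $t$. This last point is why we are allowed to pick any single $t\in\Isom(\widetilde T)$ with distinct eigenvalues and replace "$b$ preserves $\widetilde T$" by the linear relation $bt=tb$: a matrix commuting with a regular diagonal matrix is itself diagonal, hence lies in the identity component of $\Isom(\widetilde T)$, and conversely.

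The substantive direction is "equations solvable $\Rightarrow$ criterion holds." Given a solution $(a,b)$ of (1)--(5), conditions (3)--(5) give that $a\in\SL_n(\mathbb R)$ preserves $\widetilde H$ and its orientation, which is one half of what Lemma \ref{lem:sign} needs. For the other half I would argue: from $\gamma = ab$ we get $b = a^{-1}\gamma$; since $\gamma\in\Gamma\subset\SL_n(\mathbb R)$ and $a\in\SL_n(\mathbb R)$, automatically $b\in\SL_n(\mathbb R)$ and $\det b = 1$. Condition (2) forces $b$ to commute with the regular diagonal $t$, hence $b$ is diagonal, hence $b$ lies in the connected diagonal subgroup, which acts on the flat $\widetilde T$ by translations and therefore preserves $\widetilde T$ and its orientation. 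So the hypotheses of Lemma \ref{lem:sign} are met and $x$ is a transverse intersection of the same sign as $x_0$. The converse direction "criterion holds $\Rightarrow$ equations solvable" is essentially the reverse bookkeeping: a witnessing factorization $\gamma = a\cdot b$ with $a$ preserving $(\widetilde H,\text{or.})$ and $b$ preserving $(\widetilde T,\text{or.})$ gives (1) directly, (2) and (3) because orientation-preserving isometries of $\widetilde T$, resp. stabilizers of $\widetilde H$, commute with $t$, resp. $u$ (here I use that the orientation-preserving condition rules out Weyl-group elements, so $b$ is genuinely in the connected torus and not merely normalizing it), and (4)--(5) are immediate from $a\in\SL_n$ preserving the orientation of $\widetilde H$.

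The main obstacle — and the place I would be most careful — is the identification of "$a$ preserves $\widetilde H$ (as a totally geodesic submanifold through $p$, possibly moving $p$)" with the purely algebraic statement "$au = ua$ and $\det a = 1$." One has to be sure that an element of $\SL_n(\mathbb R)$ mapping the submanifold $X_{n-1}\times\mathbb R$ to itself must actually lie in the corresponding block subgroup, rather than in some larger normalizer, and to check what "preserving orientation" of the non-orientable-looking factor $\SL_{n-1}(\mathbb R)/\SO(n-1)\times\mathbb R$ really constrains; this is the role of keeping (5) as a separate condition rather than folding it into (3)--(4). A parallel subtlety on the $\widetilde T$ side is that commuting with one regular $t$ is strong enough to kill the Weyl group, so no separate orientation hypothesis on $b$ is needed beyond $bt = tb$. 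Once these two normalizer/Weyl-group issues are pinned down, the rest is routine matrix algebra, and I would present the proof as the two-way dictionary described above.
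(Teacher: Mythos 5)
Your route is essentially the paper's: turn the geometric conditions into centralizer conditions, and for the substantive (sufficiency) direction recover $\det b=1$ from (1) and (4) (since $\gamma\in\SL_n(\mathbb R)$) and then the orientation statement for $b$ from (2). That half of your argument is sound, up to one harmless imprecision: a matrix commuting with the regular $t$ and of determinant one need not lie in the identity component of the torus (its entries in the eigenbasis of $t$ may be negative), so "hence $b$ lies in the connected diagonal subgroup" is not literally correct; what saves the conclusion is that the sign part lies in $\SO(n)$ and fixes the flat pointwise, so $b$ still acts on $\widetilde T$ as a translation and is orientation preserving. This is exactly the paper's step "$\det b=1$ and (2) imply $b$ is orientation preserving on $\widetilde T$." Your discussion of the $\widetilde H$-side is also fine: since $u$ has exactly two eigenvalues, of multiplicities $n-1$ and $1$, the stabilizer of $\widetilde H$ in $\SL_n(\mathbb R)$ is precisely the block-diagonal subgroup, i.e.\ the centralizer of $u$, so no normalizer issue arises there.

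The genuine flaw is in your "only if" direction, specifically the parenthetical claim that the orientation-preserving hypothesis on $b$ "rules out Weyl-group elements, so $b$ is genuinely in the connected torus." That is false. The setwise stabilizer of the flat consists of monomial matrices, and the permutation matrix of an even permutation (say a $3$-cycle) has determinant $+1$, lies in $\SO(n)$, preserves $\widetilde T$, and acts on it with determinant equal to the sign of the permutation, hence orientation preservingly; yet it commutes with no $t$ having distinct eigenvalues. So your justification of (2) in the necessity direction collapses: a witnessing factorization in which $b$ has a nontrivial Weyl part does not directly yield a solution of the linear system, and you would either have to interpret "preserves $\widetilde T$" as acting by translations, or show that such a factorization can always be replaced by one with $b$ in the centralizer of $t$ — neither of which you do. To be fair, the paper's own proof is terse at the same spot (it simply asserts that (2) holds if and only if $b$ preserves $\widetilde T$), and only the sufficiency direction is used later in the argument; but as written, your necessity argument rests on a concretely incorrect claim rather than on an omission.
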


\begin{proof}
Notice that $(2)$ holds if and only if $b$ preserves $\widetilde T$, and $(3)$ holds if and only if $a$ preserves $\widetilde{H}$, which is also equivalent to $a$ being certain block diagonal matrix. Thus, it is clear the above equations are necessary. To see they are also sufficient, we need to show $\det b=1$ and $b$ preserves the orientation of $\widetilde T$. Since $\gamma\in \Gamma<\SL_n(\mathbb R)$, $(1)$ and $(4)$ imply $\det b=1$. Furthermore, $\det b=1$ and $(2)$ imply $b$ is orientation preserving on $\widetilde T$.
\end{proof}

Now if we ignore for the moment $(4)$ and $(5)$, and choose $t$ as a rational matrix, then the above equations become a system of \emph{linear} equations (on variable $a$) whose coefficients are in $L=\mathbb Q[\sqrt[4]2]$, after rewriting in the following way:
\begin{equation}\tag{$\ast$}\label{eq:linear}
\begin{array}{cc}
\begin{cases} 
      \gamma = a \cdot b \\
      b \cdot t = t \cdot b\\
      a\cdot u=u\cdot a\\
   \end{cases}
 & \implies \begin{cases} 
      a \cdot t = \gamma  t \gamma^{-1} \cdot a \\
      a\cdot u=u\cdot a\\
   \end{cases}
\end{array} 
\end{equation}


\section{Proof of theorem}\label{sec:proof} In this section, we prove Theorem \ref{thm:main}. The approach is very similar to \cite{ANP20}. Following the notation in Section \ref{sec:construction}, let $M=\Gamma\backslash X_n$ be the closed locally symmetric manifold and $T, H\subset M$ be the closed submanifolds that we constructed. Although $H$ might represent a trivial homology class in $M$, we want to show that under a sufficiently large degree cover $M'$ of $M$, the lift of $H$ will represent a non-trivial class in $M'$. As a first step, we need to understand how extra intersections behave under congruence covers.

\subsection{Extra intersections under congruence covers} For each prime ideal $p\subset \mathcal O_L$, and positive integer $k$, we denote the congruence subgroup $\Gamma(p^k)=\Gamma\cap \ker{\varphi_{p^k}}$, where $\varphi_{p^k}$ is the (mod $p^k$)-homomorphism as in Section \ref{sec:lattice}. For any extra intersection $[\gamma] = \pi_1(H)\cdot \gamma\cdot \pi_1(T),$
by passing onto the (mod $p^k$) congruence cover, either the intersection gets removed, which then we do not need to worry about anymore, or the intersection remains, in which case it has to satisfy certain algebraic constraints.

Assume an extra intersection $x$ remains under a (mod $p^k$) congruence cover $M'$ of $M$, then there is a lift $T'$ of $T$ and $H'$ of $H$ such that there is an intersection of $T'$ and $H'$ in $M'$ that projects to $x$. This means $[\gamma]$ has a representative in $\pi_1(M')\cong \Gamma(p^k)$. Therefore, there exist $a \in \pi_1(H)$ and $b \in \pi_1(T)$ such that 
$$ a^{-1}\cdot\gamma\cdot b^{-1} \in \pi_1(M')<\ker(\varphi_{p^k}),$$
which implies that $\gamma = a\cdot b$ (mod $p^k$). Since an element $a\in \pi_1(H)$ commutes with $u$, and an element $b \in \pi_1(T)$ commutes with $t$. We obtain the following proposition.

\begin{lemma} If an extra intersection $[\gamma]$ remains under a (mod $p^k$) congruence cover, then the linear system of equations \eqref{eq:linear}
\[
\begin{cases} 
      a \cdot t = \gamma  t \gamma^{-1} \cdot a \\
      a\cdot u=u\cdot a\\
   \end{cases}
\]
has a (mod $p^k$) solution.
\end{lemma}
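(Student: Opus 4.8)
The plan is to convert the geometric hypothesis — that the intersection point $x$ survives in the congruence cover $M'=\Gamma(p^k)\backslash X_n$ — into an explicit matrix congruence, and then to run, now inside the finite ring $\mathcal O_L/p^k$, exactly the elimination already carried out in \eqref{eq:linear}.

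First I would extract the congruence. If $x$ persists in $M'$, then some lift of the loop at $x_0$ obtained by travelling from $x_0$ to $x$ along $H$ and returning along $T$ — which by construction represents the double coset $[\gamma]=\pi_1(H)\cdot\gamma\cdot\pi_1(T)$ — must lie in $\pi_1(M')\cong\Gamma(p^k)$. Hence there are $a\in\pi_1(H)$ and $b\in\pi_1(T)$ with $a^{-1}\gamma b^{-1}\in\Gamma(p^k)\subset\ker(\varphi_{p^k})$, that is, $\gamma\equiv a\,b\pmod{p^k}$. Next I would record the two commutation relations handed to us by the geometry: since $\pi_1(H)=B_n$ consists of block-diagonal matrices of the form appearing in the proof of Lemma \ref{lem:criterion}, we have $a\,u=u\,a$; and since $\pi_1(T)$ preserves the flat $\widetilde T$, the equivalence ``$(2)\Leftrightarrow b$ preserves $\widetilde T$'' noted there gives $b\,t=t\,b$ for the chosen rational isometry $t$ with distinct eigenvalues (which we may take to be $p$-integral so that the reduction makes sense).

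Finally I would eliminate $b$. Because $a\in\Gamma\subset\SL_n(\mathcal O_L)$ has $\det a=1$, its reduction $\bar a$ is a unit in $\operatorname{Mat}_n(\mathcal O_L/p^k)$, so from $\gamma\equiv a\,b$ I may write $b\equiv a^{-1}\gamma\pmod{p^k}$ and substitute into $b\,t=t\,b$; clearing $a^{-1}$ precisely as in \eqref{eq:linear} turns $a^{-1}\gamma\,t\equiv t\,a^{-1}\gamma$ into $a\,t\equiv \gamma\,t\,\gamma^{-1}\,a\pmod{p^k}$, and together with $a\,u=u\,a$ this exhibits $\bar a$ as a solution of \eqref{eq:linear} modulo $p^k$. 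The only point deserving a remark is that $\mathcal O_L/p^k$ is not a field once $k\geq 2$, but this is harmless: $a$ is the only matrix ever inverted, and $\det a=1$ makes $\bar a\in\GL_n(\mathcal O_L/p^k)$. I therefore do not expect a genuine obstacle here — the substantive step is the translation of ``the intersection is not killed in the cover'' into the congruence $\gamma\equiv ab\pmod{p^k}$, after which everything is the verbatim content of \eqref{eq:linear} read modulo $p^k$.
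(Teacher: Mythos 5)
Your proposal is correct and follows essentially the same route as the paper: translate survival of the intersection in the congruence cover into the existence of $a\in\pi_1(H)$, $b\in\pi_1(T)$ with $a^{-1}\gamma b^{-1}\in\ker(\varphi_{p^k})$, hence $\gamma\equiv ab\pmod{p^k}$, then use $au=ua$, $bt=tb$ and eliminate $b$ exactly as in \eqref{eq:linear}. Your extra remarks on the invertibility of $\bar a$ in $\operatorname{Mat}_n(\mathcal O_L/p^k)$ and the $p$-integrality of $t$ are careful touches the paper leaves implicit, but the argument is the same.
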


\subsection{Solving the linear system}\label{sec:linear-eq} However, some extra intersections might survive all finite covers, which we call \emph{left over} intersections. Thus by the above lemma, if $[\gamma]$ represents a left over intersection, then for all prime ideal $p$ and any $k\in \mathbb Z^+$, the linear system of equations \eqref{eq:linear} 
\[
\begin{cases} 
      a \cdot t = \gamma  t \gamma^{-1} \cdot a \\
      a\cdot u=u\cdot a\\
   \end{cases}
\]
has a (mod $p^k$) solution $a_k$. Fix $p$ and let $k\rightarrow \infty$, we obtain a $p$-adic solution $a_p=\lim_{k\rightarrow \infty} a_k$ in the $p$-adic completion $L_p$ of $L$. Since each $a_k\in \SL_{n}(\mathcal O_L)$, we conclude that $a_p\in \SL_{n}(\mathbb Z[\sqrt[4]2]_p)$. Moreover, the linear system of equations is defined over $L$, so if $V$ is the solution space over the field $L$, then the solution space over $L_p$ is exactly $V\otimes L_p$. In particular, the fact that the linear system has a solution $a_p\in L_p$ implies that the system is consistent, hence it also has a solution over the base field $L$. Since $\det a_p=1\neq 0$ and because $L$ is dense in $L_p$, we can choose the solution over $L$ to be invertible.

\begin{lemma}\label{lem:one-dim} If $[\gamma]$ represents a left over intersection, then the solution space to the linear system of equations
\[
\begin{cases} 
      a \cdot t = \gamma  t \gamma^{-1} \cdot a \\
      a\cdot u=u\cdot a\\
   \end{cases}
\]
is $1$-dimensional.
\end{lemma}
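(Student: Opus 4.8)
The plan is to exploit the invertible solution over $L=\mathbb Q[\sqrt[4]2]$ that the \emph{left over} hypothesis already furnishes---this is precisely the element produced in the discussion preceding the statement---in order to rigidify the solution space, reducing the dimension count to an intersection of two centralizers whose size is forced by the choice of $g_0$.

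Concretely, write $t'=\gamma t\gamma^{-1}$ and let $V\subseteq\operatorname{Mat}_n(L)$ be the solution space of the system. Since $[\gamma]$ is left over, we have an invertible $a_0\in V$ with entries in $L$. I would first check that $a\mapsto a_0^{-1}a$ is an $L$-linear isomorphism from $V$ onto the common centralizer $Z=\{\,c\in\operatorname{Mat}_n(L):ct=tc,\ cu=uc\,\}$: from $a_0t=t'a_0$ and $a_0u=ua_0$ we get $a_0^{-1}t'=ta_0^{-1}$ and $a_0^{-1}u=ua_0^{-1}$, so the equations $at=t'a$ and $au=ua$ are equivalent to $(a_0^{-1}a)t=t(a_0^{-1}a)$ and $(a_0^{-1}a)u=u(a_0^{-1}a)$, and the converse is the same computation read backwards. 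It thus remains to show $\dim_L Z=1$. Now $t$ is conjugate by $g_0$ to a diagonal matrix with pairwise distinct entries (powers of $u_0$, in particular lying in $L$), so the centralizer of $t$ in $\operatorname{Mat}_n(L)$ is $g_0\cdot\{\text{diagonal matrices}\}\cdot g_0^{-1}$; and $u=\diag(1,\dots,1,-(n-1))$ has exactly two eigenvalues, so every element of the centralizer of $u$ preserves the eigenline $\langle e_n\rangle$. Writing a general element of $Z$ as $c=g_0\,\diag(d_1,\dots,d_n)\,g_0^{-1}$ and setting $w=g_0^{-1}e_n$, the condition $c\,e_n\in\langle e_n\rangle$ unwinds to $\diag(d)\,w\in L\cdot w$, i.e.\ $d_i=d_j$ whenever $w_iw_j\neq0$. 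Hence, provided $w=g_0^{-1}e_n$ has all of its coordinates nonzero, all the $d_i$ coincide, $c$ is a scalar matrix, and $Z=L\cdot I$ is one-dimensional, as required.

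The one point that needs care---and, to my mind, the main obstacle---is ensuring that $g_0^{-1}e_n$ has no zero coordinate. This is a nonempty Zariski-open condition on $g_0\in\SL_n$, hence holds on a dense open subset of $G$; since the transversality condition of Section~\ref{sec:transverse} is open and nonempty, a dense set meets it, so the two can be satisfied simultaneously, and by density of $G_{\mathbb Q}$ we may take $g_0\in G_{\mathbb Q}$ meeting both. I would therefore record this extra genericity when $g_0$ is chosen in Section~\ref{sec:transverse}. Note finally that $\dim_L Z$ is manifestly independent of $\gamma$, so this single statement about $g_0$ is in effect the entire content of the Lemma.
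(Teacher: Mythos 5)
Your reduction is sound and your overall strategy is correct, but the way you dispose of the key dimension count is genuinely different from the paper's, and it costs you an extra hypothesis on $g_0$. Both arguments begin the same way: the left-over hypothesis supplies an invertible $a_0\in\GL_n(L)$ solving the system, and conjugating by it identifies the solution space with the common centralizer $Z(t)\cap Z(u)$ in $\operatorname{Mat}_n(L)$ (your formulation of this step as a linear isomorphism $a\mapsto a_0^{-1}a$ is in fact a bit cleaner than the paper's, which first perturbs a second solution $\bar a$ to make it invertible). Where you diverge is in showing $Z(t)\cap Z(u)$ is one-dimensional. The paper extends scalars to $\mathbb R$ and argues geometrically: a second independent element $s$ of the centralizer would produce a continuous curve $g_\epsilon=(I+\epsilon s)/\sqrt[n]{\det(I+\epsilon s)}$ of isometries commuting with $t$ and $u$, hence preserving both $\widetilde T$ and $\widetilde H$ and therefore fixing their (transverse, hence single-point) intersection $\widetilde{x_0}$; but $\Stab_G\widetilde{x_0}$ is compact and contains only finitely many elements commuting with the regular element $t$, so $g_\epsilon$ is constant and $s$ is scalar. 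This derives one-dimensionality from transversality alone, for whatever $g_0$ was fixed in Section \ref{sec:transverse}. You instead compute the centralizer explicitly, which is clean and elementary but only closes under the additional genericity condition that $g_0^{-1}e_n$ has full support; as stated, your argument does not prove the lemma for an arbitrary transversal $g_0$, so it requires retrofitting the construction. You correctly identify this and your fix is legitimate (the condition is Zariski-open and dense, hence compatible with the open transversality condition and with density of $G_{\mathbb Q}$), so there is no gap, only a modification of the setup; one could even avoid the retrofit by also using the row condition $e_n^T c\in L\cdot e_n^T$ coming from preservation of the other eigenspace of $u$, which you do not exploit. Your closing remark that the lemma reduces entirely to a statement about $g_0$ slightly overstates matters: the left-over hypothesis is still needed to produce the invertible $a_0$ that makes the identification $V\cong Z(t)\cap Z(u)$ possible.
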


\begin{proof} By the above discussion, we know that the solution space is at least $1$-dimensional, which is spanned by an invertible matrix $a\in \GL_n(L)$. Suppose there is another solution $\bar a\in \operatorname{Mat}_n(L)$, then by substituting $a,\bar a$ into the equations, we obtain
\[
\begin{cases} 
      (a^{-1}\bar a)\cdot t=t\cdot (a^{-1}\bar a) \\
      (a^{-1}\bar a)\cdot u=u\cdot (a^{-1}\bar a)
   \end{cases},
\]
or equivalently, $(a^{-1}\bar a)\in Z(t)\cap Z(u)$, where the centralizers are taken in $\operatorname{Mat}_n(L)$. Now we make the field extension $L\subset \mathbb R$, and consider the linear system over the real numbers. Since $Z(t)\cap Z(u)$ is a linear subspace (considered over $\mathbb R$) containing at least a one dimensional subspace spanned by $I$, if it has more than one dimension, then we can find a matrix $s$, linearly independent with $I$, such that $s$ commutes with both $t$ and $u$. Then there is a curve of isometries $g_\epsilon\in \SL_n(\mathbb R)$ around $I$ given by
\[g_\epsilon=\frac{I+\epsilon s}{\sqrt[n]{\det (I+\epsilon s)}},\quad |\epsilon|<\epsilon_0,\]
where $\epsilon_0$ is a sufficiently small real number such that $\det(I+\epsilon s)$ stays positive. However, any isometry $g\in \SL_n(\mathbb R)$ which commutes with both $t$ and $u$ has to preserve both $\widetilde T$ and $\widetilde H$ hence also their intersection $\widetilde{x_0}$, so it must be in $\Stab_{G}\widetilde{x_0}\cong \SO(n)$. Since there is only finitely many elements in $K$ that commutes with $t$ (because $t$ is in the regular direction), the continuous family $g_\epsilon$ must be constant, hence $s$ is a scalar multiple of $I$, which is a contradiction. This implies $Z(t)\cap Z(u)$ is one dimensional spanned by $I$, and that $a,\bar a$ can only differ by a scalar. In other words, the solution space is one dimensional.

\end{proof}


\subsection{Finishing the proof} By Lemma \ref{lem:finite}, there are only finitely many connected components of $T\cap H$. For each component, either it goes away when lifting to a finite cover, or it is a left over intersection. Thus when passing to a sufficiently large degree cover $M'$ of $M$, only the left over intersections remain in $T'\cap H'$. To obtain $i(T',H')\neq 0$, it is sufficient to show the following proposition.

\begin{proposition} If $[\gamma]$ represents a left over intersection, then it is transverse and takes the same sign as base intersection $x_0$.
\end{proposition}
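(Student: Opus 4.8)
The plan is to combine Lemma \ref{lem:sign} with Lemma \ref{lem:one-dim} and Corollary \ref{cor:Grunwald-Wang}. By Lemma \ref{lem:sign}, it suffices to exhibit a factorization $\gamma = a\cdot b$ in $\SL_n(\mathbb R)$ with $a$ preserving $\widetilde H$ and its orientation and $b$ preserving $\widetilde T$ and its orientation; by Lemma \ref{lem:criterion} this is equivalent to solving equations (1)--(5) there. First I would invoke Lemma \ref{lem:one-dim}: since $[\gamma]$ is left over, the solution space of the linear system \eqref{eq:linear} over $L$ is $1$-dimensional, spanned by some invertible $a_0\in \GL_n(L)$. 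Every candidate for $a$ is therefore of the form $\lambda a_0$ for a scalar $\lambda$, and the remaining conditions to be met are the determinant condition $\det(\lambda a_0)=1$, i.e. $\lambda^n = 1/\det(a_0)$, together with the orientation condition (5) on $\widetilde H$.

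The crux is then to find such a scalar $\lambda$ lying in $\mathbb R$ (indeed we will want it in $L$ to keep the arithmetic under control) so that $a = \lambda a_0$ actually arises as $\gamma b^{-1}$ with $b\in \pi_1(T)$ — but the key observation is that the $p$-adic solutions $a_p$ produced in Section \ref{sec:linear-eq}, coming from honest group elements $a_k\in \pi_1(H)\subset \SL_n(\mathcal O_L)$, already have determinant $1$. Thus for every prime $p$ there is a scalar $\lambda_p\in L_p$ with $\lambda_p a_0 = a_p$, hence $\lambda_p^n\det(a_0) = 1$, i.e. the element $1/\det(a_0)\in L$ is an $n$-th power in every completion $L_p$. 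Now Corollary \ref{cor:Grunwald-Wang} applies — this is exactly the role of the Grunwald--Wang machinery set up in Section \ref{sec:prelim} — and yields $\lambda\in L$ with $\lambda^n = 1/\det(a_0)$, so that $a := \lambda a_0\in \SL_n(L)$ solves (1)--(4).

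It remains to handle the orientation condition (5): $a$ must preserve the orientation of $\widetilde H$. Since $a$ commutes with $u$ it is block diagonal, $a = \mathrm{diag}(A, c)$ with $A\in \GL_{n-1}(\mathbb R)$ and $c$ a scalar, reflecting the splitting $\widetilde H \cong X_{n-1}\times\mathbb R$; preserving the orientation of the $\mathbb R$-factor means $c>0$ and preserving the orientation of $X_{n-1}$ is automatic for the connected group acting on a symmetric space, so the condition is just a sign. If the $\lambda$ produced above gives the wrong sign, I would replace $\lambda$ by $\zeta\lambda$ where $\zeta$ is a suitable $n$-th root of unity in $L$ (or, when $n$ is odd, by $-\lambda$, noting $(-1)^n=-1$), again staying inside $L$; one checks this can always be arranged to flip $c$ to be positive while keeping $\lambda^n=1/\det(a_0)$ — alternatively, one absorbs the sign into $b$ using that $b$ is automatically orientation preserving on $\widetilde T$ once $\det b=1$, as in the proof of Lemma \ref{lem:criterion}. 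With $a = \lambda a_0 \in \SL_n(\mathbb R)$ satisfying all of (1)--(5) and $b = a^{-1}\gamma$, Lemma \ref{lem:sign} gives that $x$ is transverse and has the same sign as $x_0$, completing the proof.

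The step I expect to be the main obstacle is the bookkeeping around the determinant and orientation: making sure that the scalar ambiguity in Lemma \ref{lem:one-dim} is genuinely an honest scalar (not just a diagonal centralizing matrix — this is why the proof of Lemma \ref{lem:one-dim} was careful to pin down $Z(t)\cap Z(u) = L\cdot I$), verifying that the resulting $a_p$ really are scalar multiples of a \emph{single} global $a_0$ so that the ``$n$-th power in every $L_p$'' statement is about one fixed element of $L$, and checking that the orientation sign can always be corrected within $L$ without destroying the $n$-th-power relation. Everything else is a direct application of the lemmas already proved.
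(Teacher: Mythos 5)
Your treatment of conditions (1)--(4) matches the paper's: use the $L$-rational solution from Section \ref{sec:linear-eq}, compare it with the $p$-adic solutions $a_p$ via the one-dimensionality of the solution space (Lemma \ref{lem:one-dim}) to see that $\det a$ is an $n$-th power in every $L_p$, and apply Corollary \ref{cor:Grunwald-Wang} to rescale $a$ into $\SL_n(L)$. Up to that point the argument is correct and is exactly the paper's.

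The gap is in your handling of the orientation condition (5), which is in fact the hardest part of the proof. You propose to ``replace $\lambda$ by $\zeta\lambda$ for a suitable $n$-th root of unity $\zeta\in L$, or by $-\lambda$ when $n$ is odd,'' or alternatively to ``absorb the sign into $b$.'' Neither option is available. The field $L=\mathbb Q[\sqrt[4]2]$ is real, so its only roots of unity are $\pm 1$; when $n$ is odd, $(-1)^n=-1$, so replacing $\lambda$ by $-\lambda$ destroys the relation $\lambda^n=1/\det(a_0)$, i.e.\ it breaks $\det a=1$ (condition (4)). Absorbing a sign into $b$ does not help either: condition (5) is a condition on $a$ alone, and multiplying both $a$ and $b$ by $-1$ again flips $\det a$ when $n$ is odd. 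So for odd $n$ there is \emph{no} remaining scalar freedom, and one must prove directly that the lower-right entry $z$ of the normalized $\bar a$ is positive. The paper does this by a genuinely arithmetic argument: first $\bar a=\omega_p a_p$ with $\omega_p$ a $p$-adic $n$-th root of unity, hence $\bar a\in\SL_n(\mathcal O_L)$ and $\tau(z)z=1$, so $z=\pm u_0^m$ by Lemma \ref{lem:fundamental unit}; then the possibility $z=-u_0^m$ is excluded because $-1$ cannot be a $\sqrt[4]2$-adic limit of powers of $u_0$, which is verified by the mod~$4$ computation $u_0^2\equiv 1$ and $u_0\not\equiv -1\pmod 4$. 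This is where the specific choice of field and the explicit fundamental unit from Section \ref{sec:prelim} are actually used, and none of it appears in your proposal; ``one checks this can always be arranged'' is precisely the step that fails. (A smaller inaccuracy: for the block $\diag(A,c)$ acting on $\widetilde H=X_{n-1}\times\mathbb R$, orientation preservation is not simply $c>0$; see Lemma \ref{lem:orientation}, where the answer depends on the parity of $n$.)
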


\begin{proof}
In view of Lemma \ref{lem:sign} and \ref{lem:criterion}, we just need to find a solution $a, b\in \operatorname{Mat}_n(\mathbb R)$ satisfying $(1)-(5)$ in Lemma \ref{lem:criterion}. From Section \ref{sec:linear-eq}, we already have a solution $a\in \operatorname{Mat}_n(L)$ with non-zero determinant, thus we can solve for $b=a^{-1}\cdot \gamma$ and $(1)-(3)$ follows. The rest of the proof is to modify $a$ to satisfy $(4)$ and $(5)$.

For $(4)$, we compare the rational solution $a\in \operatorname{Mat}_n(L)$ with the $p$-adic solutions $a_p\in \SL_{n}(\mathbb Z[\sqrt[4]2]_p)$ for each prime ideals $p$. Since $\det a\neq 0$, by Lemma \ref{lem:one-dim} there exists a non-zero $c_p\in L_p$ such that $a=c_p\cdot a_p$. Take the determinant on both sides, we obtain $\det a=c_p^{n}\det a_p=c_p^n$, and this holds for all prime ideal $p$. It follows that the equation $x^n=\det a$ is solved locally in $L_p$ for all $p$, thus by Corollary \ref{cor:Grunwald-Wang}, it is also solved globally in $L$. In other words, there exists a non-zero $c\in L$ such that $c^n=\det a$. Therefore, if we set $\bar a=a/c$, and correspondingly $\bar b=c\cdot b$, then $\bar a, \bar b$ satisfies $(1)-(4)$.

We need to show $\bar a$ also satisfies $(5)$. Note that $(3)$ implies that $\bar a$ is a block diagonal matrix whose lower-right entry forms a single block. To check whether $\bar a$ is orientation preserving on $\widetilde H$, we need the following lemma.

\begin{lemma}\label{lem:orientation} For the block diagonal matrix $\bar a$, it preserves the orientation of $\widetilde{H}=X_{n-1}\times \mathbb R$ exactly when either $n$ is even, or $n$ is odd and the lower-right entry of $\bar a$ is positive.
\end{lemma}
\begin{proof}
we write
\[\bar a=\left(\begin{array}{cc}
A_0 & 0 \\ 
0& z
\end{array}\right). \] 
If $n$ is even, then $\bar a$ and $-\bar a$ acts the same on $\widetilde{H}$ hence we can always assume $z>0$, and consequently $\det A_0>0$ since $\det A_0\cdot z=1$. Then $\bar a$ is line homotopic to
\[\bar a'=\left(\begin{array}{cc}
z^\frac{1}{n-1}A_0 & 0 \\ 
0& 1
\end{array}\right) \]
inside $\Isom \widetilde{H}$, where $z^\frac{1}{n-1}A_0\in \SL_{n-1}(\mathbb R)$ is orientation preserving on $X_{n-1}$. Thus $\bar a'$ is orientation preserving on $\widetilde H$. Since under continuous variations the orientation does not change, we conclude $\bar a$ is also orientation preserving on $\widetilde{H}$.

Similarly, if $n$ is odd, and $z>0$, the upper left block of $\bar a$ also has positive determinant. Hence by the same argument in the above, we have $\bar a$ is orientation preserving. If $n$ is odd and $z<0$, then it differs from an orientation preserving isometry by
$$\left(\begin{array}{ccc}
-1 & &  \\ 
 & I^{(n-2)} &  \\ 
 & & -1
\end{array} \right).$$
One can check that the adjoint action of the above matrix on $\widetilde H$ is orientation reversing. This implies that in this case $\bar a$ is orientation reversing.
\end{proof}

Now we continue with the proof of Theorem \ref{thm:main}. We have $\bar a =a/c=(c_p/c)\cdot a_p$. Take determinant on both sides, we obtain $(c_p/c)^n=1$. Thus $c_p/c$ is a $p$-adic $n$-th root of unity and in particular it is a $p$-adic integer, that is, $c_p/c\in \mathbb{Z}[\sqrt[4]{2}]_p$. We denote $\omega_p=c_p/c$, and we have $\bar a=\omega_p \cdot a_p\in  \SL_n(\mathbb{Z}[\sqrt[4]{2}]_p)\cap \SL_n(L)=\SL_n(\mathcal O_L)$ so that the lower-right entry $z\in \mathbb{Z}[\sqrt[4]{2}]$. According to Lemma \ref{lem:orientation}, we need to show $z>0$ in the case $n$ is odd. So from now on, we assume $n$ is odd.

First, we claim $z$ is a unit in $L$ which satisfies $\tau(z)z=1$. Since $a_p$ arises as a $p$-adic limit of $a_k$, it is also block diagonal. We denote the lower-right entry of each $a_k$ by $z_k$, and the lower-right entry of $a_p$ by $z_p$. Since each $a_k$ are elements in $\Gamma$, by construction, we have $\tau(z_k)z_k=1$. Thus passing to the limit, we have $\tau(z_p)z_p=1$. Comparing the lower-right entries of equation $\bar a =\omega_p\cdot a_p$, we have $z=\omega_p\cdot z_p$. So we compute $\tau(z)z=\tau(\omega_p)\omega_p\tau(z_p)z_p=\tau(\omega_p)\omega_p\in \mathcal O_L$ since $z\in \mathcal O_L$. But $\tau(\omega_p)\omega_p$ is also a $p$-adic $n$-th root of unity, so it must be $\pm 1$. Because $n$ is odd, it can only be $+1$ and therefore, $\tau (z)z =1$.

Next, we prove $z>0$. By Lemma \ref{lem:fundamental unit}, we have $z = \pm u_0^m$ for some integer $m$. Recall that $a_p$ is a $p$-adic limit of $a_k \in \SL_n(\mathcal O_L)$, and since each $a_k$ preserves the orientation of $\widetilde{H}$, by Lemma \ref{lem:orientation}, the lower-right entry of each $a_k$ is positive hence $z_k$ has the form $+u_0^{m_k}$. Therefore,
\[ z = \lim_{k\rightarrow\infty} (\omega_p \cdot u_0^{m_k}).\]
Suppose for the purpose of contradiction that $z = -u_0^m$, then we have
\[ -1 = \lim_{k\rightarrow\infty} (\omega_p \cdot u_0^{m_k-m}).\] 
Taking the $n$-th power on both side (and note $n$ is odd), we obtain 
\[ -1  = \lim_{k\rightarrow\infty} u_0^{n(m_k-m)}.\]
Note that the above equation is in the sense of $p$-adic limit. In particular, when $p=\sqrt[4]2$ it means that $-1$ is the $\sqrt[4]2$-adic limit of a sequence of powers of $u_0$. By possibly passing to a subsequence and possibly raising to power $-1$ on both sides, we can assume it is a sequence of positive powers of $u_0$. We claim this is impossible.

Assume on the contrary that $-1=\lim_{i\rightarrow \infty} u_0^{k_i}$ for some positive sequence $k_i$ under the $\sqrt[4]2$-adic metric. Since $4=(\sqrt[4]2)^8$, we have $-1=\lim_{i\rightarrow \infty} u_0^{k_i}$ (mod 4). However, by direct computations we have
\[u_0^2=1\;(\textrm{mod } 4),\quad u_0 \ne -1 (\textrm{mod } 4).\]
Thus $u_0^k\neq -1$ (mod 4) for any integer $k$. In particular $-1=\lim_{i\rightarrow \infty} u_0^{k_i}$ (mod 4) cannot hold. The contradiction shows that $z$ can only be of the form $+u_0^m$, so $z>0$.

We have then verified $\bar a, \bar b$ satisfy all conditions $(1)-(5)$ in Lemma \ref{lem:criterion}, and therefore by Lemma \ref{lem:sign}, the left over intersection of $[\gamma]$ is transverse and has the same sign as the base intersection. This completes the proof.
\end{proof}


\section{Gromov norm and bounded cohomology}\label{sec:Gromov norm}

\subsection{Gromov norm}
We have constructed above a closed locally symmetric manifold $M$ modeled on $\SL_n(\mathbb R)/\SO(n)$, and a homology class $\alpha\in H_k(M,\mathbb R)$ represented by a totally geodesic submanifold which has a circle factor. Thus it is not hard to see that $\alpha$ has zero Gromov norm, and this proves Corollary \ref{cor:main}.

\begin{proposition}\label{cor:S1-factor}
If $\alpha\in H_k(M,\mathbb R)$ is represented by a submanifold $N\times S^1\subset M$, then $||\alpha||_1=0$.
\end{proposition}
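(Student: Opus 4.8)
The plan is to exploit the standard fact that the simplicial volume (Gromov norm) is multiplicative under products and vanishes on any homology class supported on a manifold admitting a self-map of nonzero degree that factors through a lower-dimensional complex --- concretely, the fact that $\|S^1\|=0$. First I would recall that if $f\colon N\times S^1\to M$ is the map realizing $\alpha$, i.e. $f_*[N\times S^1]=\alpha$ (up to a nonzero rational multiple, which does not affect whether the norm is zero), then by naturality of the Gromov norm under continuous maps we have $\|\alpha\|_1=\|f_*[N\times S^1]\|_1\leq \|[N\times S^1]\|_1$, so it suffices to show $\|N\times S^1\|=0$.

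Next I would invoke Gromov's product inequality: for closed oriented manifolds $A,B$ one has $\|A\times B\|\leq \binom{\dim A+\dim B}{\dim A}\,\|A\|\cdot\|B\|$ (the constant is irrelevant here). Since $\|S^1\|=0$ --- because $S^1$ has amenable (indeed abelian) fundamental group, so all its bounded cohomology above degree $0$ vanishes and hence by the duality principle its simplicial volume is zero, or alternatively because one can represent $[S^1]$ by cycles $\frac1m\sum_{i=1}^m\sigma_i$ of arbitrarily small total weight by subdividing the circle into $m$ arcs --- we conclude $\|N\times S^1\|\leq C\cdot\|N\|\cdot\|S^1\|=0$, hence $\|N\times S^1\|=0$ and therefore $\|\alpha\|_1=0$.

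Alternatively, and perhaps more self-containedly, I would give the direct cycle argument: take a fundamental cycle $z$ for $N$ and, for the circle factor, the degree-one cycle on $S^1$ obtained by cutting $S^1$ into $m$ equal arcs, which after the Eilenberg--Zilber/shuffle product with $z$ yields a fundamental cycle for $N\times S^1$ whose $\ell^1$-norm is bounded by $\frac1m$ times a constant depending only on $z$ and $\dim N$; letting $m\to\infty$ gives $\|N\times S^1\|=0$. Either route makes the proof a one-paragraph affair. There is no real obstacle here: the only mild point to be careful about is that $\alpha$ is only represented by the submanifold up to a rational multiple and that the inclusion $N\times S^1\hookrightarrow M$ need not be injective on fundamental groups, but neither issue matters since we only use functoriality of $\|\cdot\|_1$ under the (possibly non-injective) inclusion map and scale-invariance of the statement $\|\alpha\|_1=0$.
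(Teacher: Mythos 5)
Your proof is correct and follows essentially the same route as the paper: functoriality of the Gromov norm under the inclusion $N\times S^1\hookrightarrow M$, followed by Gromov's product inequality and $\|S^1\|=0$. The additional remarks (direct subdivision cycle, scale-invariance under rational multiples) are fine but not needed beyond what the paper already uses.
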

\begin{proof}
Let $i:N\times S^1\rightarrow M$ be the inclusion map, then by the properties of Gromov norm \cite{Gro82}, we have
$$||\alpha||_1=||i_*([N\times S^1])||_1\leq ||N\times S^1||\leq C||N||\cdot||S^1||=0.$$
\end{proof}

\subsection{Dupont's conjecture}

Dual to the $\ell^1$ norm on the homology, one can rephrase results on Gromov norm using bounded cohomology. By the duality principle \cite{Gro82} (See also \cite[Proposition F.2.2]{BP92}), if $||\alpha||= 0$, then any cohomology class $\omega\in H^k(M,\mathbb R)$ satisfying $\langle \omega, \alpha\rangle\neq 0$ does not have bounded representatives. Thus, producing a non-trivial zero Gromov norm homology class is equivalent to the non-surjectivity of the comparison map in the same degree. On the other hand, it is conjectured in \cite{Dup79} that all $G$-invariant forms on $X$ (arising from the continuous cohomology $H_c^*(G,\mathbb R)$ via the van Est isomorphism) have bounded representatives.

\begin{conj}(Dupont) If $G$ is a connected, non-compact, semi-simple Lie group with finite center, then the comparison map
$$c_G^*:H_{b,c}^*(G,\mathbb R)\rightarrow H_{c}^*(G,\mathbb R)$$
is always surjective.
\end{conj}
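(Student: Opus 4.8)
The plan is to establish the conjecture one primitive class at a time. Recall that the van Est isomorphism together with \'E.~Cartan's theorem identifies $H^*_c(G,\mathbb R)$ with $H^*(X_u)\cong(\Lambda\,\mathfrak p^*)^K$, the cohomology of the compact dual symmetric space $X_u$; in particular it is a finite-dimensional graded-commutative algebra, and for $G=\SL_n(\mathbb R)$ it is an exterior algebra on primitive ``Borel classes'' of degrees $5,9,13,\dots$ (with one additional Euler-type generator in degree $n$ when $n$ is even). Since the comparison map $c_G^*$ is multiplicative, its image is a subalgebra, so it suffices to produce a bounded representative for each primitive generator. First I would record the cases already known: when $G$ has real rank one the positive-degree part of $H^*_c(G)$ is spanned by a volume or Euler class, which is bounded by Gromov's proportionality principle \cite{Gro82}; when $G$ is of Hermitian type the degree-two K\"ahler class is bounded by the Milnor--Wood / Domic--Toledo inequality, and by the work of Hartnick--Ott the comparison map is in fact surjective in every degree.

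For the remaining groups the next step is to clear out the high-degree primitive classes using straightening. Fix a cocompact lattice $\Gamma<G$. By \cite{LW19} and its extension \cite{Wang20}, the comparison map $c_\Gamma^k\colon H^k_b(\Gamma,\mathbb R)\to H^k(\Gamma,\mathbb R)$ is surjective whenever $k\geq\dim X-\rank X+2$. Given a primitive generator $\omega\in H^k_c(G,\mathbb R)$ in such a degree, its restriction to $\Gamma$ lies in the image of $c_\Gamma^k$, hence is bounded; applying the transfer $H^*(\Gamma,\mathbb R)\to H^*_c(G,\mathbb R)$ defined by integration over the compact quotient $\Gamma\backslash G$ (which is a left inverse to restriction and commutes with the comparison maps) then shows that $\omega$ itself lies in the image of $c_G^*$. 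This reduces the conjecture to the finitely many primitive generators sitting in the range $5\leq k\leq\dim X-\rank X+1$. For $G=\SL_n(\mathbb R)$ this range is empty when $n=3$, so the conjecture already follows there from \cite{LW19}; and for $n\geq 4$ its first member is the Borel class $b_5\in H^5_c(\SL_n(\mathbb R))$.

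The final and hardest step is to bound these low-degree primitive classes. The natural attempt is to write down an explicit $G$-invariant \emph{bounded} Borel cocycle representing each of them --- for instance by a Goncharov-type polylogarithmic formula, or by working in the Burger--Monod model $H^*_{b,c}(G,\mathbb R)\cong H^*\!\big(L^\infty(B^{\bullet+1})^G\big)$ over the Furstenberg boundary $B=G/P$ and searching for a bounded measurable cocycle with the prescribed periods --- and to prove the resulting cochain is bounded. I expect this to be the main obstacle, and the present paper indicates why: precisely in this range the geodesic and barycentric straightening methods of \cite{Thu97, LW19} break down, because the relevant cycles can be ``too flat'' --- they may even carry a Euclidean $S^1$-factor, as in Theorem \ref{thm:main} --- so any proof must avoid straightening altogether and instead exploit the boundary description or some transfer or multiplicative structure special to the Borel classes. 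Until such an input is found the conjecture remains open, and the construction underlying Theorem \ref{thm:main} should be regarded as evidence that no naive straightening argument can reach it below the stable range.
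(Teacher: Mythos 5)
This statement is a \emph{conjecture} in the paper, not a theorem: the paper records Dupont's conjecture as an open problem, and its only engagement with it is the final proposition showing that the newly constructed zero-norm class $\alpha$ does not \emph{contradict} the conjecture (every restricted $G$-invariant form integrates to zero over $\alpha$). There is therefore no proof in the paper to compare against, and your proposal does not supply one either --- as you yourself say in the last sentence. The honest verdict is that there is a genuine gap, and it is the entire substance of the conjecture: you give no argument, bounded cocycle, or even a credible mechanism for the boundedness of the primitive generators in degrees $5\leq k\leq\dim X-\rank X+1$ (already $b_5\in H^5_c(\SL_n(\mathbb R))$ for $n\geq 4$). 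Naming a Goncharov-type formula or the Burger--Monod boundary model as a ``natural attempt'' is a research program, not a proof step.

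That said, the reductions you do carry out are correct and standard, and worth stating precisely since they delimit what remains open. Multiplicativity of $c_G^*$ reduces to primitive generators; Borel's theorem supplies a cocompact lattice $\Gamma<G$; the transfer $H^*(\Gamma)\to H^*_c(G)$ (integration over $\Gamma\backslash G$) is a left inverse to restriction compatible with the comparison maps, so surjectivity of $c_\Gamma^k$ in degrees $k\geq\dim X-\rank X+2$ from \cite{LW19, Wang20} does lift to surjectivity of $c_G^k$ there; and for $\SL_3(\mathbb R)$ the single generator sits in degree $5=\dim X$, where boundedness is the (known) positivity of simplicial volume. Note also that the present paper actively undercuts the most tempting route to close the gap: Corollary \ref{cor:main} shows $c_\Gamma^k$ itself fails to be surjective at $k=\dim X-\rank X$ for these lattices, so one cannot hope to push the lattice-plus-transfer argument below the stable range; any proof of Dupont's conjecture in low degrees must produce bounded representatives of the specific invariant classes directly, exactly the step your proposal leaves blank.
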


\begin{remark} Monod further conjectured \cite{Mon06} that this is an isomorphism.
\end{remark}

For any cocompact lattice $\Gamma<G$, we have the following commutative diagram
\[\begin{tikzcd}
H_{b,c}^k(G,\mathbb R) \arrow[r, "i_b^*"] \arrow[d,"c_G^k"]
& H_b^k(\Gamma,\mathbb R) \arrow[d,"c_\Gamma^k"] \\
H_c^k(G,\mathbb R) \arrow[r, "i^*"]
& H^k(\Gamma,\mathbb R)
\end{tikzcd}
\]
where $i^*,i_b^*$ are the natural homomorphisms induced by the inclusion $i:\Gamma\rightarrow G$, and $c_G^k,c_\Gamma^k$ are the comparison maps. The van Est isomorphism \cite{Van55} states that the continuous cohomology $H_c^*(G,\mathbb R)$ is isomorphic to the relative Lie algebra cohomology $H^*(\mathfrak{g},\mathfrak{k},\mathbb R)$, and any such class can be identified with a $K$-invariant alternating form on $\mathfrak{g}/\mathfrak{k}\cong \Omega^*(T_pX)^K$. By left translation, it further extends to a $G$-invariant form on the symmetric space $G/K$. Therefore, under this identification $i^*$ can be viewed as the restriction map from the $G$-invariant forms to the $\Gamma$-invariant forms on $G/K$.

Now specify $G=\SL_n(\mathbb R)$, $\Gamma$ the cocompact lattice as in Corollary \ref{cor:main}, and $\alpha\in H_k(\Gamma)$ the non-trivial homology class with zero Gromov norm. If a class $\eta\in H_c^k(G,\mathbb R)$ satisfies $\langle i^*(\eta), \alpha\rangle\neq 0$, then it is not in the image of $c_\Gamma^k$, hence neither is in the image of $c_\Gamma^k\circ i_b^*=i^*\circ c_G^k$. This implies that $c_G^k$ is not surjective, giving a counterexample to the Dupont's conjecture. In other words, if the Dupont's conjecture is true, then $||\alpha||_1=0$ should imply $\int_{\alpha}i^*(\eta)=0$. The following proposition confirms this, so our construction of zero Gromov norm class does not violate with Dupont's conjecture.

\begin{proposition}
Let $G=\SL_n(\mathbb R)$, $\Gamma$ be any cocompact lattice in $G$, $X_n=G/K$ and $X_{n-1}$ be the totally geodesic submanifold corresponding to the block diagonal Lie subgroup $\SL_{n-1}(\mathbb R)\times \mathbb R$.
If $\eta$ is any $G$-invariant differential $k$-form on $X_n$, where $k=\dim(X_n)-\rank(X_n)$, and $\alpha\in H_k(\Gamma\backslash X_n)$ is any class represented by a totally geodesic submanifold covered by $X_{n-1}\times \mathbb R$, then
$$\int_{\alpha}\eta=0.$$ 
\end{proposition}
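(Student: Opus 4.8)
The plan is to reduce $\int_\alpha\eta$ to the vanishing of $\eta$ along the cycle, and then split on the parity of $n$. Write $\widetilde H=X_{n-1}\times\mathbb R$ for the totally geodesic submanifold in the statement, $N\times S^1$ for the compact quotient representing $\alpha$, and $\iota$ for the inclusion into $M=\Gamma\backslash X_n$. Since $\dim\widetilde H=\dim X_n-\rank X_n=k$, the form $\iota^*\eta$ is of top degree on $N\times S^1$; lifted to $\widetilde H$ it is invariant under the normalizer $N_G(\widetilde H)$, which already through its block subgroup $\cong\SL_{n-1}(\mathbb R)\times\mathbb R$ acts transitively on $\widetilde H$. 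Hence $\iota^*\eta=c\cdot\mathrm{vol}_{\widetilde H}$ for a single constant $c\in\mathbb R$, and $\int_\alpha\eta=c\cdot\mathrm{vol}(N\times S^1)$, so it suffices to show $c=0$, i.e.\ that $\eta$ restricts to $0$ along $\widetilde H$.

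For $n$ odd I would exhibit an orientation reversing symmetry. Take $A\in O(n-1)$ with $\det A=-1$ and set $k_0=\mathrm{diag}(A,-1)\in K=\SO(n)$. Conjugation by $k_0$ preserves the block structure, so $k_0$ normalizes $\widetilde H$; it acts trivially on the $\mathbb R$ factor (it commutes with $u$) and on the $X_{n-1}$ factor by $X\mapsto AXA^T$ on the tangent space $\mathfrak p_{n-1}$, whose orientation factor is $(\det A)^n=(-1)^n=-1$. Thus $k_0$ acts on $\widetilde H$ as an orientation reversing isometry while it fixes $\iota^*\eta$ because $\eta$ is $G$-invariant; a top degree form fixed by an orientation reversing diffeomorphism is $0$, so $c=0$.

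For $n$ even this argument fails, since $N_G(\widetilde H)$ contains no orientation reversing isometry of $\widetilde H$ (on the $\mathbb R$ factor one only sees translations, and on the $X_{n-1}$ factor every relevant isometry has orientation factor $(\det A)^n=+1$), and I would argue through the compact dual. Identify $H^*_c(G,\mathbb R)\cong(\Lambda^*\mathfrak p^*)^K$ with the space of harmonic forms on $X^c=\SU(n)/\SO(n)$. Up to normalization $c=\langle\eta_p,\mathrm{vol}_{\mathfrak p_1}\rangle$ with $\mathfrak p_1=T_p\widetilde H$, and since the Hodge star of the Killing metric on $\Lambda^*\mathfrak p$ commutes with the $K$-action ($K$ being connected, it preserves orientation), $c=\pm\langle(\ast\eta)_p,\mathrm{vol}_{\mathfrak q}\rangle$ where $\mathfrak q:=\mathfrak p_1^{\perp}$ is the $(n-1)$-dimensional off--block--diagonal subspace of $\mathfrak p$ and $\ast\eta$ is again $G$-invariant. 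One checks $\mathfrak q$ is a Lie triple system with $[\mathfrak q,\mathfrak q]\cong\mathfrak{so}(n-1)$, so $\exp_p\mathfrak q$ is a totally geodesic $\mathbb{H}^{n-1}_{\mathbb R}\subset X_n$ whose compact dual is a totally geodesic sphere $S^{n-1}\subset X^c$, and $\langle(\ast\eta)_p,\mathrm{vol}_{\mathfrak q}\rangle$ is, up to a volume factor, $\int_{S^{n-1}}(\ast\eta)^c$. Hence $c=0$ for every invariant $\eta$ if and only if $[S^{n-1}]=0$ in $H_{n-1}(X^c;\mathbb Q)$; and this I would read off the known ring $H^*(\SU(n)/\SO(n);\mathbb Q)$, where for $n$ even the group $H^{n-1}$ vanishes unless $n\equiv2\pmod4$, in which case it is one dimensional, spanned by a primitive generator, and that generator restricts to $0$ on the totally geodesic $S^{n-1}$ (the restriction being governed by a characteristic class of a bundle over $S^{n-1}$ that vanishes for degree/stability reasons).

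Steps one and two are routine. The main obstacle is the final cohomological input in the even case: that the totally geodesic sphere $S^{n-1}$ is rationally null-homologous in $\SU(n)/\SO(n)$ for $n\equiv2\pmod4$, equivalently that the degree $n-1$ primitive generator of $H^*(\SU(n)/\SO(n);\mathbb Q)$ restricts trivially to it. I expect this to require an explicit computation in the cohomology ring (or with the associated Borel/characteristic classes), and it is the step most at risk of hidden subtleties.
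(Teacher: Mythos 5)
Your reduction to a pointwise statement is the same as the paper's: restrict to $T_p\widetilde H=\mathfrak p_1$, pass through the Hodge star (which indeed commutes with the $K$-action), and show every $K$-invariant $(n-1)$-form has zero component on $\mathrm{vol}_{\mathfrak q}$, $\mathfrak q=\mathfrak p_1^\perp=\langle e_{1n},\dots,e_{(n-1)n}\rangle$. Your odd-$n$ argument is correct and self-contained, and in fact slicker than the paper's for that case: $k_0=\diag(A,-1)$ with $\det A=-1$ lies in $\SO(n)$, commutes with $u$, hence preserves $\mathfrak p_1$, acts trivially on $\mathbb R u$ and with determinant $(\det A)^{n}=-1$ on $\mathrm{Sym}^2_0(\mathbb R^{n-1})$ when $n$ is odd, forcing the top-degree restriction to vanish.

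The even case, however, is a genuine gap. You have correctly reformulated the needed statement — that the totally geodesic sphere $S^{n-1}=\SO(n)/\SO(n-1)$ is rationally null-homologous in the compact dual $\SU(n)/\SO(n)$, equivalently that the primitive degree-$(n-1)$ generator (present exactly when $n\equiv 2\ (\mathrm{mod}\ 4)$; note $H^{n-1}$ need not be one-dimensional for large $n$, since decomposable classes can occur, but those restrict trivially for degree reasons) restricts to zero on that sphere. But this reformulation is exactly equivalent to the proposition in the even case, so nothing has been proved; the phrase ``governed by a characteristic class of a bundle over $S^{n-1}$ that vanishes for degree/stability reasons'' is not an argument, and you flag it yourself as the weak point. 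The paper avoids the parity split entirely: it takes $u'=E_{1n}-E_{n1}\in\mathfrak k$, uses $\operatorname{ad}(u')(\star\eta)=0$, and computes that $\operatorname{ad}(u')$ acts nonsingularly on a two-dimensional invariant piece containing $e_{1n}^*\wedge\cdots\wedge e_{(n-1)n}^*$ (paired with $H^*\wedge e_{12}^*\wedge\cdots\wedge e_{1(n-1)}^*$, $H=E_{11}-E_{nn}$), which kills the $\mathrm{vol}_{\mathfrak q}$-component of $\star\eta$ for every $n$. Running that short Lie-algebra computation would close your gap (and would also render the odd-case symmetry argument unnecessary, though it is a nice consistency check).
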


\begin{proof} Since the integral is homogeneous, it is sufficient to show that at any basepoint $p$, the evaluation of a $K$-invariant form $\eta$ on the tangent subspace $T_p(X_{n-1}\times \mathbb R)\subset T_pX_n$ is always zero.

We identify $T_pX_n$ with $\mathfrak p$ using the Cartan decomposition $\mathfrak g\cong \mathfrak k+\mathfrak p$ at $p$, and denote $\theta:\mathfrak g\rightarrow \mathfrak g$ the Cartan involution. Fix a maximal abelian subgroup $\mathfrak a$ in $\mathfrak p$ (unique up to conjugate), under the root space decomposition, we have $\mathfrak{g}\cong \mathfrak{g}_0+\sum_{\lambda\in \Lambda}\mathfrak{g}_{\lambda}$ where $\Lambda$ denotes the set of all roots of $\mathfrak g$. Since $\theta$ sends $\mathfrak g_\lambda$ to $\mathfrak g_{-\lambda}$, if we denote $\mathfrak k_{\lambda}=(I-\theta)\mathfrak g_{\lambda}\subset \mathfrak k$ and $\mathfrak p_{\lambda}=(I+\theta)\mathfrak g_{\lambda}\subset \mathfrak p$, then we obtain the root space decompositions for $\mathfrak k$ and $\mathfrak p$: $\mathfrak{k}\cong \mathfrak{k}_0+\sum_{\lambda\in \Lambda^+}\mathfrak{k}_{\lambda}$ and $\mathfrak{p}\cong \mathfrak{a}+\sum_{\lambda\in \Lambda^+}\mathfrak{p}_{\lambda}$, where $\Lambda^+$ is the set of all positive roots and the latter splits orthogonally with respect to the $G$-invariant metric (Killing form) on $X_n$.

In the case $\mathfrak g=\sl_n(\mathbb R)$, the above decompositions have the following explicit forms. Denote $E_{ij}\in \mathfrak g$ the $n\times n$ matrix whose $(i,j)$-entry is $1$ and $0$ elsewhere. Choose the maximal abelian subgroup $\mathfrak a\subset \mathfrak p$ to be the subset of diagonal matrices, then the roots of $\mathfrak g$ are exactly dual to $\{E_{ii}-E_{jj}\}$ for each $i\neq j$. Under the root space decompositions, the invariant spaces of the root $\lambda=E_{ii}^*-E_{jj}^*$ are given by the one dimensional spaces
$$\mathfrak{g}_\lambda=\langle E_{ij}\rangle,\quad\mathfrak{k}_\lambda=\langle E_{ij}-E_{ji}\rangle,\quad\mathfrak{p}_\lambda=\langle E_{ij}+E_{ji}\rangle,$$
and for the convenience, we denote by $\mathfrak{g}_{ij}, \mathfrak{k}_{ij}$ and $\mathfrak{p}_{ij}$ respectively. Note also that $\mathfrak k_0=0$ and $\mathfrak g_0=\mathfrak a$.

We pick an orthonormal basis $e_1,...,e_{n-1}$ in $\mathfrak a$ and $e_{ij}$ the normalized unit vector of $E_{ij}+E_{ji}$ in $\mathfrak p_{ij}$. So the collection of $\{e_i|_{i=1}^{n-1},e_{ij}|_{1\leq i<j\leq n}\}$ form an orthonormal basis in $\mathfrak p$. Now the tangent space of the totally geodesic submanifold $X_{n-1}\times \mathbb R$ is spanned by $\{\mathfrak a, \mathfrak p_{ij}\}$ where $1\leq i<j<n$, so its normal space is spanned by $\{e_{in}|_{i=1}^{n-1}\}$. For any alternating $k$-form $\eta$ on $T_p(X_{n-1}\times \mathbb R)$, we can write in terms of the $k$-wedge of the orthonormal basis, and the evaluation of $\eta$ on $T_p(X_{n-1}\times \mathbb R)$ is (up to a sign) the coefficient in front of
$$e_1^*\wedge...\wedge e_{n-1}^*\wedge\bigwedge_{1\leq i<j<n} e_{ij}^*,$$
which is the same as the evaluation of $\star \eta$ (Hodge star operator) on the normal space of $T_p(X_{n-1}\times \mathbb R)\subset T_pX_n$. Our goal is to show for any $K$-invariant form $\eta$ this value is zero. Since the Hodge star operator sends $K$-invariant forms to $K$-invariant forms, it is equivalent to show that for any $K$-invariant $(n-1)$-form, the coefficient in front of $e_{1n}^*\wedge...\wedge e_{(n-1)n}^*$ is zero.
Since $\star \eta$ is $\operatorname{Ad}(K)$-invariant, it is $\operatorname{ad}(\mathfrak k)$-vanishing. In particular, for $u=E_{1n}-E_{n1}\in \mathfrak k_{1n}$ we have $\operatorname{ad}(u)(\star \eta)=0$. However, for the linear endomorphism $\operatorname{ad}(u):\bigwedge^{n-1}\mathfrak p\rightarrow \bigwedge^{n-1}\mathfrak p$, there is an invariant $2$-dimensional subspace $V_0$ spanned by
$$\{e_{1n}^*\wedge e_{2n}^*\wedge...\wedge e_{(n-1)n}^*, H^*\wedge e_{12}^*\wedge...\wedge e_{1(n-1)}^*\}$$
where $H=E_{11}-E_{nn}$. This is because by the Lie algebra computations, 
$$\operatorname{ad}(u)(e_{in})=e_{1i},\quad \operatorname{ad}(u)(e_{1i})=-e_{in},\quad 1<i<n,$$
and 
$$\operatorname{ad}(u)(e_{1n})=a\cdot H,\quad \operatorname{ad}(u)(H)=b\cdot e_{1n},\quad a,b\neq 0,$$
where $a,b$ depends on the scale of the metric. Thus the restriction of the linear endomorphism $\operatorname{ad}(u)|_{V_0}$ has matrix form
$$\left(\begin{array}{cc}
0 & a \\ 
(-1)^nb & 0
\end{array}\right) $$
which is non-singular. Therefore, $\operatorname{ad}(u)(\star \eta)=0$ implies that the component of $\star \eta$ on $V_0$ must be zero, and in particular the coefficient in front of $e_{1n}^*\wedge...\wedge e_{(n-1)n}^*$ is zero. This completes the proof.

\end{proof}

\begin{remark}
	It is pointed out by the anonymous referee that one can use the stability results to show the map (induced by the inclusion)
	\[H_c^k(\SL_{n-1}(\R)\times \R)\rightarrow H_c^k(\SL_n(\R))\]
	is zero in degree $k=\dim(X_n)-\rank(X_n)$. The proposition then follows immediately. Our proof is rather an explicit computation, which might be of independent interest.
\end{remark}


\def\cprime{$'$}



\end{document}